\documentclass[12pt]{amsart}
\usepackage[letterpaper, margin=1.25in]{geometry}
\usepackage{amsmath,amsthm,amscd,color}
\usepackage{amssymb}
\usepackage{amsfonts}
\usepackage{latexsym}
\usepackage{mathrsfs}
\usepackage{hyperref}
\usepackage{geometry}
\usepackage{fancyvrb}
\usepackage{dsfont}
\usepackage[dvipsnames]{xcolor}
\usepackage{tikz-cd}
\usepackage{rotating}
\usepackage{graphicx,diagbox}
\usepackage{caption}
\usepackage{subcaption}
\usepackage{MnSymbol}
\usepackage{stmaryrd}

\newtheorem{theorem}[equation]{Theorem}
\newtheorem{lemma}[equation]{Lemma}
\newtheorem{proposition}[equation]{Proposition}

\theoremstyle{remark}
\newtheorem{definition}[equation]{Definition}

\numberwithin{equation}{section}
\numberwithin{table}{section}

\renewcommand{\frak}{\mathfrak}

\DeclareMathOperator{\CR}{CR}

\newcommand{\Z}{{\mathbb{Z}}}
\newcommand{\R}{{\mathbb{R}}}
\newcommand{\C}{{\mathbb{C}}}

\newcommand{\F}{{\mathbb{F}}}

\newcommand{\TT}{{\mathcal{T}}}
\newcommand{\FF}{{\mathcal{F}}}

\newcommand{\CC}{{\mathcal{C}}}
\newcommand{\LL}{{\mathcal{L}}}

\title{Khovanov Homology in Connected Sums, Properties and Applications}
\author{Alan Du}

\begin{document}

\maketitle

\begin{abstract}
We extend the definition of Khovanov-Lee homology to links in connected sums of interval bundles over surfaces and $S^1\times S^2$'s, and construct a Rasmussen-type invariant for links in these manifolds. As an application, we prove an inequality relating the Rasmussen-type invariant to the genus of surfaces with boundary in four-manifolds that are boundary connect sums of $D^2\times S^2$, $\C P^2\setminus B^4$, and $DTS^2$.
\end{abstract}

\section{Introduction}

Khovanov homology is a homology theory for links in $S^3$ that categorifies the Jones polynomial \cite{khovanov-categorification}. Lee \cite{lee-hlee} constructed a deformation of Khovanov homology that was used by Rasmussen \cite{rasmussen-s-inv} to define a numerical invariant for knots, called the $s$-invariant, that gives a bound for the slice genus of a knot.

An ongoing problem is to generalize Khovanov homology to links in other $3$-manifolds. So far, this has been done for orientable interval-bundles over surfaces by \cite{aps-i-bundles}, $\R P^3$ by \cite{gabrovsek-rp3}, and $\#^q(S^1\times S^2)$ by \cite{willis-kh}. 

This paper builds off of previous work of the author, who constructed Khovanov homology for links in $3$-manifolds $M=M_1\#\dots\# M_r$ where $M_i$ is an orientable interval bundle over some surface $F_i$ for $i=1,\dots, r$. First, we show how to generalize this construction to manifolds $M^q=M\#(\#^q (S^1\times S^2))$ to get a Khovanov chain complex $CKh(L)$ for links in $M^q$.

\begin{theorem}\label{thm:kh-inv-s1s2}
For $\LL$ a link in $M^q$, and $L$ a diagram of $\LL$, the bigraded chain homotopy type of $CKh(L)$ is an invariant of the link $\LL$ under isotopy.
\end{theorem}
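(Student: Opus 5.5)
We follow Willis's construction for $\#^q(S^1\times S^2)$, replacing $S^3$ by the author's earlier Khovanov theory for $M=M_1\#\dots\#M_r$. Present $M^q$ as $M$ with $q$ two-handles removed, that is, as $0$-surgery on a $q$-component unlink $U$ sitting in a ball in $M$. A link $\LL\subset M^q$ then has a diagram $L$ in the diagrammatic model for $M$, together with $q$ marked regions where the strands pass through the attaching spheres. We take the model $CKh(L)=\operatorname{hocolim}_k CKh_M(L_k)$ (suitably shifted), where $L_k$ is obtained by inserting $k$ full twists (more precisely, twists $k_1,\dots,k_q$, one for each sphere) on the strands passing through each surgery region. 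Since each region acts like a copy of the $S^3$ situation, Rozansky's argument and its extension by Willis go through locally: the infinite twist complex stabilizes. The bigraded homotopy type of the limit is well defined by the earlier invariance theorem for $CKh_M$, which we assume.

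Invariance then needs three kinds of moves, checked in this order.
\begin{enumerate}
\item Isotopies of $\LL$ that avoid the surgery spheres. These are Reidemeister moves and the additional diagrammatic moves of the earlier $M$-theory, applied away from the twist regions. They commute with the maps in the directed system, so the homotopy equivalences from the earlier invariance result assemble into an equivalence of homotopy colimits. Naturality of those equivalences with respect to the cobordism maps used as structure maps is what is needed here. It holds because the structure maps are compositions of saddle maps, which are local and supported away from the move.
\item Sliding the twist region along the strands, and changing which strands are framed. These are handled exactly as in Willis's proof by local computation.
\item Handle slides of a strand of $L$ over a surgery sphere, i.e.\ isotoping a strand across the $2$-sphere $\{pt\}\times S^2$. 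This is the key move.
\end{enumerate}

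For the handle slide, we reproduce Willis's argument. After sliding, the diagram differs by one extra strand running parallel through the twist region. Rozansky's projector property, that the infinite twist kills turnbacks, shows that the colimit complexes agree up to homotopy, with the grading shift matching the change in the number of strands and in the writhe.

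We expect the main obstacle to be verifying that this projector argument is unaffected by the presence of the nontrivial topology of the $M_i$. In particular, the generators of $CKh_M$ carry extra labelings or gradings coming from homotopy classes of curves in the surfaces $F_i$. The first thing to try is to choose all twist regions inside a small ball disjoint from the connected-sum spheres. Every circle created inside that ball is then trivial in $M$, so the local Khovanov algebra there is the ordinary one and Rozansky's estimates apply verbatim. The remaining work is checking that the extra gradings in $M$ are preserved by the twisting maps, so that the homotopy colimit is taken in the bigraded category.
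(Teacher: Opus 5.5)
Your architecture matches the paper's: finite approximations $L(\vec k)$ controlled by Proposition~\ref{prop:truncation-approx}, the earlier invariance theorem for moves in the $M$-part, and Willis's argument for moves near the surgery spheres. But both places where you supply your own justification go wrong.

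\emph{Localization.} You cannot localize at the level of ``the local Khovanov algebra in the ball.'' Resolution circles of $L(\vec k)$ generally pass through the twist boxes and out into the rest of $M$, so they are not circles in a ball. Willis's and Rozansky's arguments are statements about tangle complexes, not about an algebra living in the ball. The paper localizes one step earlier, in Bar-Natan's category:
\begin{itemize}
\item $BN(L_i(\vec k))=BN(\overleftarrow{L}_i(\vec k))\otimes BN(\overrightarrow{L}_i(\vec k))$ is a planar-algebra tensor product;
\item Willis's proof of his Theorem~3.9 gives $BN(\overrightarrow{L}_1(\vec k))\simeq BN(\overrightarrow{L}_2(\vec k))$ for the tangles in $\overrightarrow{P}$;
\item one tensors this with the identity on the $M$-side, and only then applies $\FF$.
\end{itemize}
Done this way, there are no extra gradings or labels to check. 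Equivalences supported away from the twist boxes then commute with the twist system simply because the supports are disjoint in the planar tensor product. This does not depend on the stabilization maps being composites of saddles, which is not how they arise.

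\emph{The key move.} Pushing a strand through a surgery sphere changes $n_i$ by two, not one. The parity of $n_i$ is an isotopy invariant, which is why evenness is a condition on $\LL$. In $L(\vec k)$ the move creates a turnback: two adjacent strands through the twist box, joined by a cap at one end. The complex $C^*(\TT_n^\infty)$ used here cannot kill turnbacks. If it did, an unknot in a ball pushed once through a surgery sphere would get a contractible complex instead of $V$, contradicting the theorem you are proving. So ``the infinite twist kills turnbacks, hence the colimits agree'' is not a valid mechanism.

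What is actually needed is Willis's invariance proof for each move of Proposition~\ref{prop:moves-sigmaq} supported in $\overrightarrow{P'}$, transplanted through the tensor decomposition above. These moves are the Reidemeister moves involving surgery spheres and surgery lines, and the point pass and surgery wrap moves; none of the latter is clearly covered by your list (1)--(3). You also need the earlier paper's treatment of finger and mirror moves across $\partial\overrightarrow{P'}$.
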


Similar to \cite{willis-kh}, $CKh(L)$ is constructed by inserting complexes $C^*(\TT_{n_i}^\infty)$ into the diagram, where $C^*(\TT_{n_i}^\infty)$ can be seen as the Bar-Natan complex for the formal infinite twist on $n_i$ strands. This complex is realized as the stable colimit of a sequence of full twist complexes $C^*(\TT_{n_i}^k)$ as $k$ goes to infinity, and just as in \cite{willis-kh}, $CKh(L)$ can be approximated by these finite complexes.

Next, we define the Lee deformation $CLee(L)$ of the Khovanov chain complex and construct the Lee generators, which generate Lee homology $HLee(L)$. Unlike the Lee generators for links in $S^3$, our generators are somewhat non-canonical and depend on additional choices. However, the Rasmussen $s$-invariant, which we define using the Lee generators, does not depend on these choices.

\begin{theorem}\label{thm:s-inv}
The $s$-invariant of an oriented link $s(L)$ is an invariant of the oriented link $\LL\subset M^q$ under isotopy.
\end{theorem}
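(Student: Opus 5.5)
Given the statement only, I have to guess at the setup, so here is a plan modeled on Rasmussen's original argument, adapted along the lines of Willis's treatment of $\#^q(S^1\times S^2)$. The $s$-invariant is defined from the quantum filtration degrees of the Lee generators in $HLee(L)$. I expect the definition to use the degree of the class $[\mathfrak{s}_o]$ (or of $[\mathfrak{s}_o\pm\mathfrak{s}_{\bar o}]$), built from the orientation $o$ together with auxiliary choices: a choice of resolution of the infinite twist complexes $C^*(\TT_{n_i}^\infty)$ and a choice of labeling on the essential circles in the $F_i$ summands. The proof therefore needs two independent facts.

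\textbf{Step 1: independence from the auxiliary choices for a fixed diagram.} I would show that two Lee generators associated to the same orientation but different choices differ by a filtered automorphism of $HLee(L)$. Alternatively, one can show they have the same filtration degree directly. One approach is to express each Lee generator as a finite-stage generator in $C^*(\TT_{n_i}^k)$ for large $k$ and use the approximation of $CKh(L)$ by finite twist complexes. Inside a finite stage, the two choices should be related by a sequence of local modifications, for example sliding strands around the $S^2$ or swapping the sign on a twisted circle. Each modification multiplies the generator by a unit or maps it to a generator of the same degree. Since the filtration degree of a homology class depends only on the class up to units, the resulting $s$ is well defined.

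\textbf{Step 2: Reidemeister and handle-slide invariance.} By Theorem~\ref{thm:kh-inv-s1s2}, diagrams of isotopic links are related by Reidemeister moves, moves across the $S^1\times S^2$ summands (handle-slides through the attaching spheres), and the moves specific to interval bundles. Each of these induces a chain homotopy equivalence that preserves the bigrading. I would check that the same maps, with the Lee differential added, are filtered chain homotopy equivalences of $CLee$. They are built from Bar-Natan cobordism maps, so this should follow from the universal Frobenius algebra formulation.

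\textbf{Step 3: the equivalences carry Lee generators to Lee generators.} I would then verify, as Rasmussen does, that each equivalence sends the Lee generator for $(L,o)$ to a nonzero unit multiple of a Lee generator for $(L',o)$, possibly with different auxiliary choices. For the local Reidemeister moves this is Rasmussen's computation. For moves passing strands through the infinite twist, it holds because the maps are colimits of maps on finite twists, which again consist of Reidemeister moves on the finite stage; there the generators are canonical and Rasmussen's argument applies. Combining this with Step 1, the filtered isomorphism preserves the degree of the relevant class, so $s(L)=s(L')$.

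\textbf{Main obstacle.} The hardest part will be Step 3 for the infinite twist: keeping track of the Lee generators through the stable colimit and confirming that the image at the limit is a unit multiple of a generator rather than a combination of generators. I would try first to fix $k$ large enough that the filtration degrees have stabilized, which the approximation result should provide, and then argue entirely inside the finite complex $C^*(\TT_{n_i}^k)$.
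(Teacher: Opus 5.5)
\textbf{Gap: independence from the auxiliary choice (Step~1).}
Your Steps~2--3 and the reduction to finite approximations $L(\vec k)$ in the stable range follow the same outline as the paper. The gap is Step~1, which is the one genuinely new point here.

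The auxiliary datum is not a resolution of $C^*(\TT_{n_i}^\infty)$; the piece $\epsilon_{n_i,o}$ is determined by $o$. It is a set $P$ of oriented basepoints, one on each projection component of the diagram. The labels $a,b$ are then propagated so that circles joined by a crossing receive opposite labels. Changing $P$ does \emph{not} multiply the generator by a unit. Reversing the local orientation of the basepoint on a projection component $L_1$ swaps $a\leftrightarrow b$ on every circle coming from $L_1$. Hence $\frak s(L,o,P')=\frak s(L,o',P)$, where $o'$ is $o$ reversed on all of $L_1$. When there are at least two projection components, this is neither $\frak s(L,o,P)$ nor $\frak s(L,\overline o,P)$: it is the generator of a different orientation.

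Your fallback, ``maps it to a generator of the same degree'', is then exactly the claim $s(L,o',P)=s(L,o,P)$, and you give no argument for it. Also, ``sliding strands around the $S^2$'' is a diagram move, not a change of data on a fixed diagram, so it does not address this.

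\textbf{What is missing.}
For a single projection component, $\frak s(L,o,\overline P)=\frak s(L,\overline o,P)$, and the defining formula for $s$ is symmetric under $o\leftrightarrow\overline o$, so nothing changes. For several projection components, write $L=L_1\sqcup L_2$ with $L_1$ the projection component whose basepoint is reversed. Then use the disjoint-union formula $s(L_1\sqcup L_2,o_1\sqcup o_2,P_1\sqcup P_2)=s(L_1,o_1,P_1)+s(L_2,o_2,P_2)-1$, proved as in Beliakova--Wehrli, Lemma~6.1. Moving a basepoint within its projection component either changes nothing or amounts to such a reversal.

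Your filtered-automorphism alternative could be realized by an involution exchanging $a$ and $b$ on the circles of one projection component, with vertex signs correcting the sign it picks up on splits. You would, however, have to construct it and check it against the signed differential.

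This $P$-independence is also what lets your Step~3 close. After a Reidemeister~II move or saddle joining two projection components, the induced map sends the generator to a unit multiple of a generator only for compatibly placed basepoints. You need $P$-independence to compare the result with an arbitrary choice. The same bookkeeping is needed at the finite stages: $L(\vec k)$ lives in $M\#S^3$, where the generators are not canonical, contrary to your remark in Step~3.
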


In addition to $S^3$, the $s$-invariant was also defined for links in $\R P^3$ by \cite{mw-s-rp3} and in $\#^q(S^1\times S^2)$ by \cite{mmsw-s-invariant}. In the case that $\LL\subset M^q$ is a local $S^3$, $\R P^3$, or $\#^q(S^1\times S^2)$ link, that is, $\LL$ is entirely contained in a $S^3$, $\R P^3$, or $\#^q(S^1\times S^2)$ connect summand of $M^q$, then our $s$-invariant $s(\LL)$ agrees with the previously defined $s$-invariant of $\LL$ considered as a link in $S^3$, $\R P^3$, or $\#^q(S^1\times S^2)$, respectively.

For an oriented cobordism $\Sigma$ in $S^3\times I$ between oriented links $L_0$ and $L_1$, there exists a cobordism map $\phi_\Sigma:HLee(L_0)\to HLee(L_1)$ that is quantum filtered of degree $\chi(\Sigma)$, where $\chi$ is the euler characteristic. If the cobordism is weakly-connected, meaning that every component of $\Sigma$ has a boundary component in $L_0$, then the cobordism map preserves Lee generators, and as a result, the $s$-invariants of $L_0$ and $L_1$ give a bound on the euler characteristic of $\Sigma$.

The same is true for cobordisms $\Sigma$ in $M^q\times I$:

\begin{theorem}\label{thm:cobordism-s-ineq}
Let $\Sigma\subset M^q\times I$ be a cobordism from link $L_0$ to link $L_1$. If every component of $\Sigma$ has a boundary component in $L_0$, then
$$s(L_1)-s(L_0)\ge \chi(\Sigma).$$
\end{theorem}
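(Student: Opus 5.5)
We follow Rasmussen's argument and its adaptations in \cite{mw-s-rp3} and \cite{mmsw-s-invariant}. First put $\Sigma$ in general position with respect to the projection $M^q\times I\to I$. After isotopy, $\Sigma$ then decomposes into a movie of elementary cobordisms: isotopies of $L_t$ in $M^q$ and Morse births, deaths and saddles. Isotopies can be realized by the moves used in the proof of Theorem~\ref{thm:kh-inv-s1s2}. These are Reidemeister moves in the local surface diagram, moves passing strands across the $S^1\times S^2$ attaching spheres (handle slides through the infinite twist regions), and the extra moves for the interval bundle summands. Each Morse event can be isotoped off the regions where the $C^*(\TT^\infty_{n_i})$ are inserted and off the connected sum spheres, so it happens in a small ball of the diagram.

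To each elementary piece we assign a map on $CLee$. For isotopies we use the chain homotopy equivalences of Theorem~\ref{thm:kh-inv-s1s2}, which are filtered of degree $0$. For births and deaths we use the unit and counit, and for saddles we use multiplication or comultiplication; these are filtered of degree $+1$, $+1$ and $-1$ respectively. Because the Morse events are local and the infinite twist complexes are inserted as tensor factors, these maps are defined on the colimit complex and commute with the stabilization maps $C^*(\TT^k)\to C^*(\TT^{k+1})$. The composite $\phi_\Sigma$ is therefore filtered of degree $\chi(\Sigma)$.

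Next we check how $\phi_\Sigma$ acts on Lee generators. Reidemeister-type maps take a Lee generator to a nonzero multiple of the corresponding Lee generator, as in \cite{rasmussen-s-inv}. The same must hold for the moves across $S^1\times S^2$ and the bundle moves, and here we use the explicit description of the generators through the approximating finite twist complexes. Each Lee generator is a sum of terms over the choices made in the construction, and we verify the statement for each compatible choice, or show that the change-of-choice maps preserve the span. For Morse moves, a birth sends $\mathfrak s_o$ to a sum of two Lee generators. A saddle merging two components, or splitting one, sends $\mathfrak s_o$ to a nonzero multiple of a generator when the orientations are compatible. The weak connectivity hypothesis is used exactly as in Rasmussen's argument: every component of $\Sigma$ meets $L_0$, so the orientation of $\Sigma$ induces compatible orientations, and $\phi_\Sigma(\mathfrak s_{o_0})$ is a nonzero combination of Lee generators. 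Its projection contains $\mathfrak s_{o_1}$ with nonzero coefficient, so $\phi_\Sigma[\mathfrak s_{o_0}]\neq 0$.

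The inequality then follows formally. We have $s(L)$ equal to the filtration degree of $[\mathfrak s_o]$ (or of $[\mathfrak s_o\pm\mathfrak s_{\bar o}]$) shifted by $\pm1$, as in the definition. A filtered map of degree $\chi(\Sigma)$ takes the class realizing $s(L_0)$ to a nonzero class, and a filtration argument with the $\pm$ combinations gives $s(L_1)-1\ge s(L_0)-1+\chi(\Sigma)$. The main obstacle is the second step: showing that the isotopy maps for moves across the $S^1\times S^2$ summands and connected sum spheres send Lee generators to nonzero multiples of Lee generators. The generators are non-canonical, so we would first establish this on the finite approximations $C^*(\TT^k_{n})$ for large $k$. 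There we would use the idempotent-like behavior of the full twist on Lee generators from \cite{mmsw-s-invariant}, and then pass to the colimit.
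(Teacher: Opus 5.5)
Your outline has the same skeleton as the paper's argument: elementary pieces, filtered maps of total degree $\chi(\Sigma)$, Rasmussen-style tracking of Lee generators, and reduction of the $S^1\times S^2$ summands to finite twist approximations. But it skips the ingredient that is specific to this setting.

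\textbf{Basepoints.} Here a Lee generator is not canonical, and it is not a sum over choices. $\mathfrak s(L,o,P)$ is a single labelled oriented resolution, determined by a set $P$ of oriented basepoints, one per projection component. The basepointed circle gets $a$ or $b$ by comparing its orientation with the local orientation at the basepoint, and the rest of that projection component is forced by alternation across edges. There is no nesting rule on $\Sigma$ as in Rasmussen's planar argument, and the merge and split maps are twisted by the Gabrov\v{s}ek swaps. So ``the corresponding Lee generator'' after a move has no meaning until you say how the basepoints on the two ends are related.

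This matters exactly for the moves that change projection components. For a saddle joining two projection components, the labels on the two feet come from two independent basepoints. If one local orientation is flipped, the whole labelling of that projection component flips, the inputs to $m''$ become $a\otimes b$ or $b\otimes a$, and the saddle map sends the generator to $0$. Equivalently, you were tracking the generator of an orientation reversed on one projection component, which is incompatible with the saddle.

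The paper's proof consists of fixing these choices move by move:
\begin{itemize}
\item for a saddle that changes projection components, basepoints of $L_0$ on its feet with local orientations induced by the saddle, and one basepoint of $L_1$ on the saddle (Figure~\ref{fig:saddle-loc-orientation});
\item the analogous placement for a Reidemeister II move pushing one projection component over another (Figure~\ref{fig:r2-loc-orientation});
\item a basepoint of either local orientation on a born or capped circle;
\item basepoints away from the support of all other moves, including the strand-homotopy move in the handleslide decomposition.
\end{itemize}
With these choices the Rasmussen argument goes through. The independence of $s(L,o,P)$ from $P$, proved via the disjoint-union formula, is what makes the endpoint choices $P_0,P_1$ irrelevant.

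\textbf{The final step.} As written it is too weak. Knowing only that $\phi_\Sigma(\mathfrak s_{o_0})$ is a combination of Lee generators with nonzero $\mathfrak s_{o_1}$-coefficient does not bound its filtration degree by $s(L_1)-1$. For instance, one of $[\mathfrak s_{o_1}\pm\mathfrak s_{\overline{o}_1}]$ has degree $s(L_1)+1$, so you would lose $2$ in the inequality.

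What you need is that $\phi_\Sigma[\mathfrak s_{o_0}]$ is a nonzero multiple of $[\mathfrak s_{o_1}]$, with $o_1$ induced by $\Sigma$. Weak connectivity gives this in Rasmussen's argument: the extra terms created at births die at the merges that connect the born disks to $L_0$. Then Proposition~\ref{prop:min-s-grading} gives $s(L_1)-1\ge s(L_0)-1+\chi(\Sigma)$ directly, with no further $\pm$ argument.

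\textbf{The step you flag as the main obstacle.} This is not where the paper does new work. For $M^q$, $\mathfrak s(L,o,P)$ is defined through the diagrams $\epsilon_{n_i,o}$. These lie in the stable range and coincide with the simplified generators of every finite approximation $L(\vec k)$. By \cite{mmsw-s-invariant}, the simplification $\Psi$ sends Lee generators to unit multiples. Finger and mirror moves are isomorphisms preserving generators once basepoints are placed away from them. This reduces everything to links in $M$, where the basepoint bookkeeping above is the real content.
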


Lastly, we give an application of the $s$-invariant towards slice genus bounds. Let $D(d)$ denote the $D^2$-bundle over $S^2$ with euler number $d$. For example, $D(0)=D^2\times S^2$ with boundary $S^1\times S^2$, $D(1)=\C P^2\setminus B^4$ with boundary $S^3$, and $D(2)=DTS^2$ with boundary $\R P^3$.

\begin{theorem}\label{thm:slice-genus-bound}
Let $X=X_1\natural\dots\natural X_n$, where $X_i=D(d_i)$ for $i=1,\dots,n$, $d_i\in\{0,1,2\}$. Let $\Sigma\subset X$ be a null-homologous properly embedded orientable connected surface with boundary a knot $K\subset\partial X$, then
$$2g(\Sigma)\ge -s(K).$$
\end{theorem}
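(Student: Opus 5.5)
The plan is to reduce the statement to Theorem~\ref{thm:cobordism-s-ineq}. We cut $\Sigma$ open along the cores of the handles of $X$, which produces a cobordism inside $\partial X\times I$ from a standard link $L$ to $K$. The argument then rests on one computation, a lower bound for $s(L)$. Throughout I use the normalization of the $s$-invariant for links in which the $k$-component unlink has $s=1-k$.

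\textbf{Step 1 (handle picture).} Write $X$ as $B^4$ with $n$ two-handles attached along a split unlink, the $i$-th with framing $d_i$. Let $S_i\subset X$ be the sphere formed by the core of the $i$-th handle together with a disk in $B^4$. The union $W=\bigcup S_i$ is then a wedge of spheres meeting at a point. A regular neighborhood $N(W)$ is $X$ itself up to a collar, so
$$X\setminus \mathrm{int}\,N(W)\cong \partial X\times I,$$
and the inner boundary $\partial N(W)$ is again $\partial X=\#_i\partial D(d_i)$. Perturb $\Sigma$ so that it misses the wedge point and meets each $S_i$ transversely in finitely many points. Since $\Sigma$ is null-homologous, $\Sigma\cdot S_i=0$. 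After shrinking $N(W)$, $\Sigma$ therefore meets $N(S_i)\cong D(d_i)$ in $2p_i$ fiber disks, $p_i$ of each sign.

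\textbf{Step 2 (the cobordism).} Set $\Sigma'=\Sigma\setminus \mathrm{int}\,N(W)\subset\partial X\times I$ and $P=\sum p_i$. This is a connected cobordism from $L$ to $K$, where $L$ consists of, for each $i$, $2p_i$ circle-bundle fibers of $\partial D(d_i)$, namely $p_i$ with each orientation. Concretely, these are copies of $S^1\times\mathrm{pt}$ when $d_i=0$, Hopf fibers in $S^3$ when $d_i=1$, and the fibers of $\R P^3\to S^2$ when $d_i=2$. We have $\chi(\Sigma')=1-2g-2P$.

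If $P=0$, first remove a small disk from $\Sigma$ so that $L$ is an unknot; the count below changes consistently. Because $\Sigma'$ is connected and has a boundary component in $L$, Theorem~\ref{thm:cobordism-s-ineq} applies and gives
$$s(K)\ \ge\ s(L)+1-2g-2P.$$
The theorem therefore follows from the following lemma.

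\textbf{Lemma.} The link $L$ satisfies $s(L)\ge 2P-1$ (indeed, I expect $s(L)=2P-1$).

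\textbf{Step 3 (the main obstacle: proving the Lemma).} The cobordism trick does not suffice here. Joining opposite fibers by annuli or saddles only produces upper bounds on $s(L)$. The Lemma must instead be proved by computing the Lee generator of $L$ directly in $CLee(L)$.

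Near each summand, $L$ passes through the inserted complex $C^*(\TT^\infty_{2p_i})$, with $p_i$ strands oriented each way. I would first try the finite approximations $C^*(\TT^k_{2p_i})$ from the construction behind Theorem~\ref{thm:kh-inv-s1s2}, choosing the parity of $k$ to match the framing $d_i$. In the oriented resolution of a full twist on $p$ up-strands and $p$ down-strands, the crossings between oppositely oriented strands contribute degree shifts with the favorable sign, exactly as for the positive torus links $T(2p,2pk)$ with half the strands reversed.

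The aim is to show that the Lee generator $\mathfrak s_o+\mathfrak s_{\bar o}$ has filtration degree at least $2P-2$, i.e. $s(L)\ge 2P-1$, and that this bound is uniform in $k$, so that it passes to the stable colimit. The model is the argument of Manolescu--Marengon--Sarkar--Willis for $d_i=0$ in $\#^q(S^1\times S^2)$. The cases $d_i=1$ and $d_i=2$ reduce to the same twist-complex computation, in $S^3$ and in $\R P^3$ respectively.

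Because the Lee generators and the complexes for the different summands are assembled independently, the summands should contribute additively to the bound. Substituting the Lemma into the inequality of Step 2 gives $s(K)\ge -2g$, that is, $2g(\Sigma)\ge -s(K)$.
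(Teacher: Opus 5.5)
\textbf{There is a genuine gap: the Lemma in Step 3 is false in general.} Steps 1--2 are fine. Taking the spheres to form a wedge is even slightly cleaner than the paper's tubed neighborhood $N$, which allows $k$ extra unknotted boundary circles. The trouble is the direction of the cobordism. You apply Theorem~\ref{thm:cobordism-s-ineq} with the fiber link $L$ as the \emph{incoming} end, so you need the lower bound $s(L)\ge 2P-1$, and this fails in two ways.

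\emph{The case $d=0$.} For $X=D(0)=D^2\times S^2$, $L$ is $2p$ fibers $S^1\times\{x_j\}$ of $S^1\times S^2$, $p$ of each orientation. Reflecting the $S^2$ factor shows that $L$ is isotopic to its own mirror. Hence $s(L)=s(T(0;p,p))=1-2p<2p-1$ for every $p\ge 1$.

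\emph{Several summands.} The bound also fails as soon as two summands are hit. The disjoint-union formula $s(L_1\sqcup L_2)=s(L_1)+s(L_2)-1$ charges $-1$ for each extra summand, so the local contributions do not add up to $2P-1$. For example, for $X=D(1)\natural D(1)$ with $p_1=p_2=1$, $L$ is a split union of two Hopf links, so $s(L)\le 1+1-1=1<3$. Your heuristic also points the wrong way: for $k=1$, the positive torus link $T(2p,2p)$ with half its strands reversed has $s=1-2p$.

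\textbf{The missing idea is to put the mirror on the knot rather than on the link.}
\begin{enumerate}
\item Turn $\Sigma'$ upside down to get a cobordism from $\overline K$ to $\overline L$. Since $\Sigma'$ is connected, every component meets $\overline K$, and Theorem~\ref{thm:cobordism-s-ineq} gives $s(\overline L)-s(\overline K)\ge 1-2g-2P$.
\item Now you only need an \emph{upper} bound on $s(\overline L)$. In the paper's conventions $L=\bigsqcup_i\overline T(d_i;p_i,p_i)$, so $\overline L$ is a split union of the links $T(d_i;p_i,p_i)$. Their $s$-invariants are known to be $1-2p_i$ for $d_i\in\{0,1,2\}$ \cite[Theorem 1.6]{mmsw-s-invariant}\cite[Theorem 1.1]{ren-lee-torus}\cite[Proposition 7]{ren-slice-genus}.
\item The disjoint-union formula now works in your favor, since the $-1$'s exactly cancel the $+1$'s. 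It gives $s(\overline L)=1-2P$.
\item Finally, $s(\overline K)=-s(K)$, which holds for knots but not for links, yields $s(K)\ge -2g$.
\end{enumerate}
This is the paper's argument. It requires no new twist-complex computation, and it is why the paper flips the cobordism before applying Theorem~\ref{thm:cobordism-s-ineq}.
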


The above theorem generalizes \cite[Theorem 1.15, Corollary 1.9]{mmsw-s-invariant} and the theorem of \cite{ren-slice-genus}.

In Section 2, we establish notation and outline the construction of Khovanov homology in connected sums. In Section 3, we define Khovanov homology in connected sums with $S^1\times S^2$ and prove Theorem~\ref{thm:kh-inv-s1s2}. In Section 4, we define Lee homology and the $s$-invariant and prove Theorem~\ref{thm:s-inv} and Theorem~\ref{thm:cobordism-s-ineq} as well as some additional properties of the $s$-invariant. In Section 5, we prove Theorem~\ref{thm:slice-genus-bound}.

\textbf{Acknowledgments} The author would like to thank Yi Ni for advising him in this project and Daren Chen, Qiuyu Ren, Hongjian Yang, and Gheehyun Nahm for helpful conversations.

\textbf{AI Declaration} No AI was used in the preparation of any part of this manuscript.

\section{Khovanov Homology in Connected Sums of Interval Bundles}

We will use the following notation.

\begin{itemize}
    \item Let $R$ be an arbitrary commutative, unital ring. When we discuss Lee homology, we will require that $2$ is invertible. 
    \item Let $M$ denote the oriented $3$-manifold $M=M_1\#\dots\#M_r$, where $M_i$ is an oriented $I$-bundle over the surface $F_i$.
    \item We fix $r-1$ separating spheres to cut $M$ into its connected sum components. On the diagram level, this is the same as choosing disjoint disks $D_i^{\pm}$ for $i=1,\dots,r-1$, where $D_i^-\subset F_i$ and $D_i^+\subset F_{i+1}$.
    \item A tangle diagram $T$ for a tangle $\TT\subset M$ is a projection of the tangle drawn in
    $$\Sigma=\left(\bigsqcup_{i=1}^r F_r^\circ\right)/(\partial D_i^-\sim\partial D_i^+)\subset M$$ as in Figure~\ref{fig:link-example}. We isotope the tangle to be disjoint from $D_i^\pm\times I$ except for the intersections of the tangle with the separating spheres that occur at $\partial D_i\times\{1/2\}$. Let $\CR(T)$ denote the set of crossings of $T$.
    \item A resolution $r$ for a tangle diagram $T$ is an assignment $r:\CR(T)\to\{0,1\}$. The resolution diagram $T_r$ is the crossingless tangle diagram obtained by replacing each crossing of $T$ with either the $0$ or $1$-smoothing of the crossing according to $r$.
    \item Let $\CC(M,B)=Kom(Mat(Cob^3_{/l}(M,B)))$ denote the Bar-Natan category consisting of complexes of formal linear combinations of crossingless tangle diagrams in $M$ with boundary $B$, where morphisms are matrices of formal linear combinations of cobordisms between the tangles, modulo local relations in \cite[Section 4.1.2]{bar-natan}.
    \item Let $BN(T)$ denote the Bar-Natan complex of $T$, the object of $\CC(M,\partial T)$ as constructed in Section 3 of \cite{du-kh-connect-sums}.
    \item For a link diagram $L$ of a link in $M$, let $CKh(L)$ denote the Khovanov chain complex of $L$, which is obtained by applying the Khovanov functor $\FF$ to $BN(L)$. That is, let $V$ be the two-dimensional graded $R$-module $V=\langle v_+,v_-\rangle$, where the quantum degree of $v_\pm$ is $\pm 1$. For a crossingless link diagram consisting of $n$ disjoint circles, the Khovanov functor assigns $V^{\otimes n}$, and for a cobordism, the Khovanov functor assigns an $R$-linear map as described in Section 4 of \cite{du-kh-connect-sums}.
    \item As in \cite{mmsw-s-invariant}, let $\TT_n$ denote a right-handed full twist braid on $n$ strands (denoted $\FF_n$ in \cite{willis-kh}). Multiple full twists are denoted $\TT_n^k$, and the formal infinite full twist is denoted $\TT_n^\infty$.
    \item Let $C^*(\TT_n^\infty)$ denote the infinite twist complex as in \cite[Theorem 2.2]{willis-kh}.
\end{itemize}

\begin{figure}[h]
    \centering
    \includegraphics[width=\textwidth]{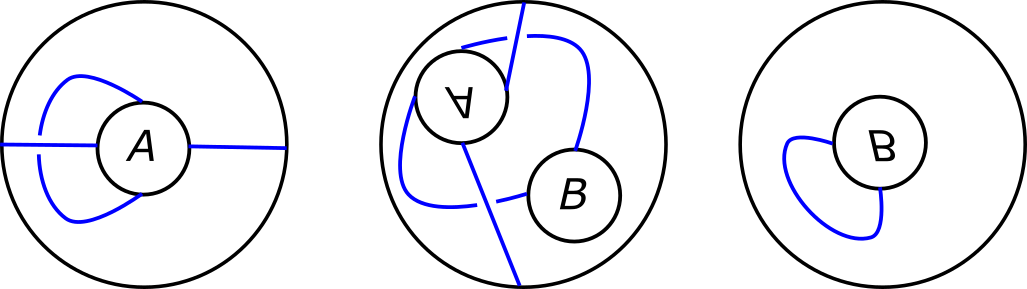}
    \caption{An example of a diagram of a link in $M=\#^3 \R P^3$}
    \label{fig:link-example}
\end{figure}

\section{Connected Sums with \texorpdfstring{$\#^q(S^1\times S^2)$}{\#^q(S^1xS^2)}}

Now we consider links in $M^q=M\#(\#^q (S^1\times S^2))$, where as before $M=M_1\#\dots\# M_r$ is a connected sum of interval bundles over surfaces. 

Recall from \cite{willis-kh} that a link in $\#^q (S^1\times S^2)$ can be represented as a diagram drawn in $P'$, a fixed planar projection of $S^1\times S^2$, which is the plane with $q$ pairs of surgery spheres connected by surgery lines. Let $\overrightarrow{P'}$ be the half plane $[0,\infty)\times\R$ together with $q$ pairs of surgery spheres (not to be confused with separating spheres) in $[1,\infty)\times\R$ connected by surgery lines. We represent links in $M^q$ as link diagrams drawn in $\Sigma^q$, where
$$\Sigma^q=\left((\Sigma\setminus D_r)\sqcup \overrightarrow{P'}\right)/\partial D_r\sim\partial\overrightarrow{P'}.$$ 

\begin{figure}[h]
    \centering
    \includegraphics[width=\textwidth]{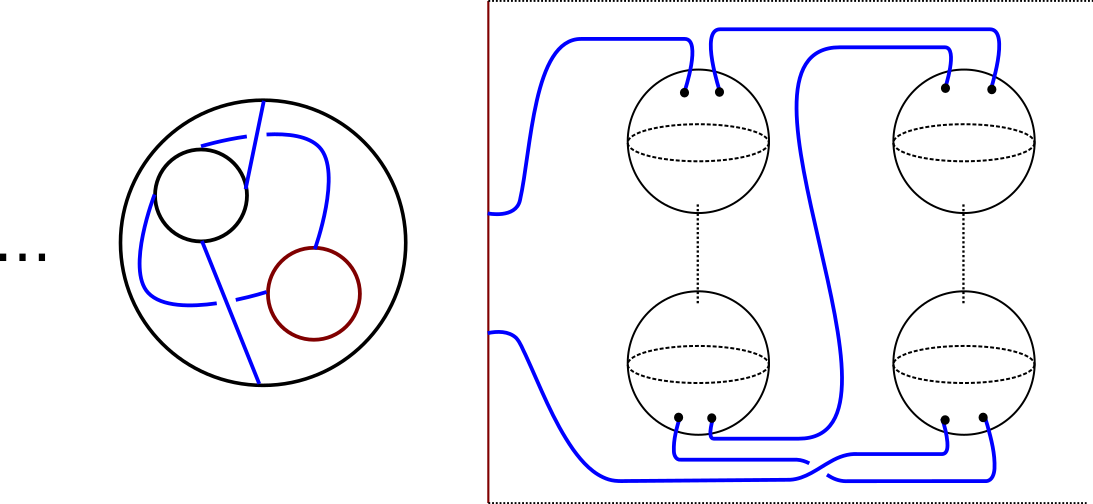}
    \caption{Link diagrams of links in $M^q$ are drawn in $\Sigma^q$}
    \label{fig:s1s2-connect-sum-example}
\end{figure}

As before, we isotope the part of the link in $M\setminus B^3$ to be disjoint from $D_i\times I$ except the intersections with the link and the separating spheres at $\partial D_i\times\{1/2\}$. We isotope the part of the link in $(\#^q (S^1\times S^2))\setminus B^3$ to be in standard position as described in \cite{willis-kh}.

\begin{proposition}\label{prop:moves-sigmaq}
Two link diagrams $L$ and $L'$ in $\Sigma^q$ represent isotopic links if and only if they are related by the following moves:

\begin{itemize}
    \item Reidemeister moves away from separating spheres, surgery spheres, and surgery lines,
    \item Finger, mirror, and handleslide moves across $\partial D_i$ for $i=1,\dots,r$,
    \item Finger and mirror moves across $\partial\overrightarrow{P'}$,
    \item Reidemeister moves in $\overrightarrow{P'}$ involving surgery spheres and surgery lines,
    \item Point pass moves and surgery wrap moves in $\overrightarrow{P'}$.
\end{itemize}
\end{proposition}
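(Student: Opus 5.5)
The plan is to reduce the statement to two known move theorems, one for each side of the separating sphere $\partial D_r\times I$ that splits $M^q$ into $M\setminus B^3$ and $(\#^q(S^1\times S^2))\setminus B^3$. The first is the move theorem for diagrams in $\Sigma$ from the author's earlier work \cite{du-kh-connect-sums}: Reidemeister moves, together with finger, mirror, and handleslide moves across the $\partial D_i$. The second is Willis's theorem \cite{willis-kh} for diagrams in $P'$: Reidemeister moves, including those involving surgery spheres and lines, together with point pass and surgery wrap moves.

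The ``if'' direction is routine. Each listed move is realized by an isotopy supported either near a separating sphere or inside a single summand. For Reidemeister moves this is immediate. For finger, mirror, and handleslide moves across $\partial D_i$ I cite \cite{du-kh-connect-sums}. For moves in $\overrightarrow{P'}$ I cite \cite{willis-kh}, noting that those isotopies are supported in $[1,\infty)\times\R$, away from $\partial\overrightarrow{P'}$, so they extend by the identity to $M^q$. Finger and mirror moves across $\partial\overrightarrow{P'}$ are local pictures near a separating sphere, handled exactly as for $\partial D_i$.

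For the ``only if'' direction, take an isotopy $\mathcal{L}_t$ between the two links and put it in general position with respect to the collection $S$ of separating spheres, i.e.\ $\partial D_i\times I$ for $i\le r$ together with the sphere over $\partial\overrightarrow{P'}$. After a perturbation, $\mathcal{L}_t$ is transverse to $S$ except at finitely many times, where a birth or death of a pair of intersection points occurs. Arguing as in \cite{du-kh-connect-sums}, I modify the isotopy so that at these times the link has a standard finger configuration, realized by a finger move, and so that the intersection points stay on the equator $\partial D_i\times\{1/2\}$. Motion of intersection points around the equator is realized by mirror moves. Between these events the link restricted to each piece of $M^q\setminus S$ moves by an isotopy rel boundary, after straightening the boundary points by a level-preserving isotopy. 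Inside the interval-bundle summands, such an isotopy rel boundary decomposes into Reidemeister moves, and handleslide moves account for an intersection point sweeping across the sphere $\partial D_i\times I$ through the fibers, as in \cite{du-kh-connect-sums}. Inside the punctured $\#^q(S^1\times S^2)$, such an isotopy rel boundary is handled by Willis's theorem applied to tangles.

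The main obstacle is this last step. Willis states his moves for closed links in $P'$, whereas here I need a relative version for tangles in the punctured $\#^q(S^1\times S^2)$ with endpoints on $\partial\overrightarrow{P'}$. I would prove it by noting that a punctured $S^1\times S^2$ is swept by isotopies fixing a neighborhood of the puncture. Since the puncture lies in $[0,1)\times\R$, away from all surgery data, Willis's argument runs verbatim: isotopies passing through the $S^2$ factors produce point pass moves, and those wrapping around the $S^1$ factors produce surgery wrap moves. A secondary concern is that the $S^2$ sphere over $\partial\overrightarrow{P'}$ has no $I$-bundle structure on the $\#^q(S^1\times S^2)$ side, so there are no handleslides across it. This is consistent with the list in Proposition~\ref{prop:moves-sigmaq}, and I would explain it by showing that any sweep across that sphere can be pushed into $\overrightarrow{P'}$ and absorbed by point pass moves.
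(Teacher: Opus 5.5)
The paper states Proposition~\ref{prop:moves-sigmaq} without proof. It treats it as the evident combination of the move theorem of \cite{du-kh-connect-sums} for $\Sigma$ and Willis's move theorem for $P'$ \cite{willis-kh}, joined by finger and mirror moves at the new sphere. Your outline is the natural way to make this precise: general position with respect to all separating spheres, finger and mirror moves at the tangency and braiding events, then relative move theorems in each complementary piece. The relative form of Willis's theorem is fine for roughly the reason you give: his moves come from a general-position analysis local to the projection and the surgery spheres, and that analysis does not see a boundary sphere held fixed in a collar. One step, however, is wrong.

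You misread the list. Handleslides across $\partial D_i$ are allowed for $i=1,\dots,r$, and $\partial D_r$ is the same circle as $\partial\overrightarrow{P'}$ in $\Sigma^q$. The paper's invariance proof explicitly uses ``a handleslide over $\partial D_r$'', supported in $\Sigma\setminus D_r$.

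These moves arise on the $M_r$ side, which does have an $I$-bundle structure, so the lack of one on the $\#^q(S^1\times S^2)$ side is beside the point. The parts of $M_r$ lying over $D_r$, above and below the removed ball, project into $D_r$, which is not part of $\Sigma^q$. An arc of the link that passes through them during the isotopy must be pushed off to one side of $\partial D_r$ or the other, and the two choices differ by exactly such a handleslide. This is what handleslides record in general; intersection points moving on the sphere are recorded by mirror moves instead. Your own treatment of the interval-bundle pieces, applied to $M_r$ minus its two balls, produces these moves, so asserting that none occur contradicts your own argument.

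The remedy you propose also fails as stated. By your own ``if'' argument, point pass moves are supported near the surgery data in $[1,\infty)\times\R$, so they cannot absorb a sweep happening at $\partial\overrightarrow{P'}$. Removing the handleslide over $\partial D_r$ would need a different argument: push the arc through the sphere by a finger move and mirror moves, then drag it across all of $\overrightarrow{P'}$ near the top, over every surgery sphere and line.

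None of this is necessary. Keep handleslides across $\partial D_r$ in the ``only if'' direction: they are in the list, and your ``if'' direction already covers them, since they are isotopies supported in $M_r$.
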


Given a link diagram $L$ in $\Sigma^q$ and a vector $\vec k=(k_1,\dots,k_q)\in\Z^q,$ we define the link diagram $L(\vec k)$ in $\Sigma\#\overrightarrow{P}$ to be the link diagram obtained by performing the replacements as described in Definition 3.5 of \cite{willis-kh}, that is, for the $i$-th pair of surgery spheres and surgery line, we remove them and connect the $n_i$ ends of the tangles with arcs parallel to the surgery line, and we insert $k_i$ full twists on $n_i$ strands at some point on the surgery line.

Say that a link $\LL\subset M^q$ is even if it has even geometric intersection numbers with each surgery sphere, that is, $n_i$ is even for all $i=1,\dots,q$. From now on, we assume all links in $M^q$ are even.

Given an even link $L$, we construct the Khovanov chain complex of $L$, denoted $CKh(L)$, as follows:

\begin{enumerate}
    \item Replace $L$ by $L(\vec 0)$, viewed as a link in $M\#S^3$.
    \item For each $i=1,\dots, q$, choose a point along $sl_i$ that is not a crossing point, and insert $C^*(\TT_{n_i}^\infty)$.
    \item Take the planar algebraic tensor product of all of these crossings in the sense of Bar Natan's tangle canopolies \cite{bar-natan}.
    \item Apply the Khovanov functor.
\end{enumerate}

Steps 1-3 of the above are analogous to Definition 3.6 of \cite{willis-kh}. Applying only steps 1-3 gives an object $BN(L)$ in Bar Natan's category of chain complexes of crossingless diagrams. However, we do not claim $BN(L)$ is an invariant of the link.

The following proposition and its proof are analogous to \cite[Proposition 3.7]{willis-kh}, with the extra step of applying the Khovanov functor at the end. 

\begin{proposition}\cite[Proposition 3.7]{willis-kh}\label{prop:truncation-approx}
Given a link diagram $L\subset\Sigma^q$ and an arbitrary homological lower bound $a$, there exists some finite $\vec k$ such that the truncated Khovanov complex $CKh_{\ge a}(L)=CKh_{\ge a}(L(\vec\infty))$ is equal to the truncation of the finite approximation complex $CKh_{\ge a}(L(\vec k))$. In other words, $CKh(L)$ can be approximated by the truncation of a finite complex in any given homological range.
\end{proposition}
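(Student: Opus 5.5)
The plan is to reduce the statement to Willis's result on stabilization of full twist complexes, applied locally at each surgery line, and then use that the Khovanov functor $\FF$ commutes with the planar algebra operations and with truncation.

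\textbf{Step 1: each twist complex stabilizes in a finite range.} By \cite[Theorem 2.2]{willis-kh}, $C^*(\TT_n^\infty)$ is the stable colimit of the twist complexes $C^*(\TT_n^k)$ in the Bar-Natan category. More precisely, for any homological bound $b$ there is a $k_0$ such that for all $k\ge k_0$ the truncation of $C^*(\TT_n^k)$ in degrees $\le b$ (suitably normalized) agrees with that of $C^*(\TT_n^\infty)$. This holds up to a simplification by Gaussian elimination, and the simplified finite complexes are literally subcomplexes or quotients of the limit. The key input is that the homological degree of the maps $C^*(\TT_n^k)\to C^*(\TT_n^{k+1})$, which are isomorphisms in a range, grows linearly in $k$ (roughly $\ge (n-1)k$ up to shifts).

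\textbf{Step 2: tensor with the rest of the diagram and bound degrees.} The remaining diagram $L(\vec 0)$ with the $q$ twist regions removed is a finite tangle diagram in $M\#S^3$. Its Bar-Natan complex $BN$ from \cite{du-kh-connect-sums} is supported in a bounded homological interval $[c_-,c_+]$, determined by the number of crossings. Gluing in the planar algebraic sense is a tensor product, and the homological degree is additive. So a term of $BN(L(\vec k))$ in degree $\ge a$ only involves terms of the $i$-th twist complex in degrees $\ge a-c_+-\sum_{j\ne i}\max\deg C^*(\TT_{n_j}^{k_j})$. Here I would use the convention of \cite{willis-kh} that the twist complexes are normalized to be supported in nonpositive degrees, so this maximum is bounded (by $0$) independently of $k_j$. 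The relevant degree range is therefore $\ge a-c_+$ for every $i$. Choosing each $k_i$ large enough by Step 1 with this bound, the truncation $BN_{\ge a}(L(\vec k))$ agrees with $BN_{\ge a}(L(\vec\infty))$, including the differentials. The differentials that cross the truncation boundary only matter through the fact that the truncation of a tensor product is determined by the truncations of the factors in the corresponding ranges.

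\textbf{Step 3: apply $\FF$.} The Khovanov functor of \cite{du-kh-connect-sums} is additive and is applied termwise, so it commutes with truncation. It therefore sends the equal truncations to equal truncations, which gives $CKh_{\ge a}(L)=CKh_{\ge a}(L(\vec k))$.

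\textbf{Main obstacle.} The delicate point is Step 2. Truncation is not a functor that commutes with tensor products on the nose. I must check that Willis's identification of the truncated finite and infinite twist complexes is compatible with planar composition inside a surface that is not planar, namely $\Sigma\#\overrightarrow{P}$. The first approach I would try is to note that all the operations are local: the twist regions lie in a disk in $\overrightarrow{P'}$, and the planar algebra gluing is the same as in \cite{willis-kh}. Willis's argument then transfers verbatim at the level of Bar-Natan complexes before $\FF$ is applied. The only new ingredient is the interval bundle part, which contributes a finite complex and affects only the constant $c_+$.
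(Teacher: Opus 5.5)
Your proposal is correct and follows the paper's approach. The paper proves this by citing Willis's argument for \cite[Proposition 3.7]{willis-kh} and then applying the Khovanov functor termwise; that argument is stabilization of each inserted twist complex, combined with the bounded homological support of the finite remainder of the diagram, which is exactly your Steps 1--3. The one slip is in Step 1: under the convention you adopt in Step 2 (twist complexes supported in nonpositive degrees and truncated from below, as in the statement), the stabilization should be stated for truncations in degrees $\ge b$, not $\le b$.
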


\begin{theorem}
The Khovanov chain complex $CKh(L)$ is an invariant of the link $\LL\subset M^q$ up to an overall grading shift, that is, given two diagrams $L_1$ and $L_2$ of $\LL$, there is a chain homotopy equivalence $CKh(L_1)\simeq CKh(L_2)$ up to an overall grading shift.
\end{theorem}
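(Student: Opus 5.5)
By Proposition~\ref{prop:moves-sigmaq}, any two diagrams $L_1$ and $L_2$ of $\LL$ are connected by a finite sequence of the listed moves. It therefore suffices to produce, for each single move $L\to L'$, a chain homotopy equivalence $CKh(L)\simeq CKh(L')$ up to an overall grading shift, and then compose these. The moves fall into two families, and I would handle them separately.

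\emph{Moves supported away from $\overrightarrow{P'}$.} These are Reidemeister moves away from the separating spheres, surgery spheres and surgery lines; finger, mirror and handleslide moves across the $\partial D_i$; and finger and mirror moves across $\partial\overrightarrow{P'}$. Each of them takes place in a region that is disjoint from the chosen insertion points of the complexes $C^*(\TT_{n_i}^\infty)$. For the Reidemeister moves, Bar-Natan's local homotopy equivalences apply inside the planar algebra, and tensoring them with the fixed complexes $C^*(\TT_{n_i}^\infty)$ preserves homotopy equivalence. For the finger, mirror and handleslide moves across separating spheres, I would invoke the invariance argument of \cite{du-kh-connect-sums}. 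That argument is local near $D_i\times I$ and produces homotopy equivalences in $\CC(M,B)$ of the pieces, which again extend through the planar algebraic tensor product. The moves across $\partial\overrightarrow{P'}$ are treated the same way, with $\overrightarrow{P'}$ viewed as a summand whose local piece is a genus-zero $I$-bundle. Finally I apply the Khovanov functor $\FF$, which sends chain homotopy equivalences to chain homotopy equivalences.

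\emph{Moves inside $\overrightarrow{P'}$.} These are Reidemeister moves involving surgery spheres and lines, point pass moves, and surgery wrap moves. Here I would follow \cite[Theorem 3.8]{willis-kh}. Willis shows that his complex is invariant under exactly these moves. The key inputs are that $C^*(\TT_n^\infty)$ absorbs crossings and twists (sliding a crossing through the infinite twist, or adding a full twist, gives a homotopy equivalent complex up to shift), and that the complex is independent of the insertion point along $sl_i$. Since every step of his argument takes place inside $\overrightarrow{P'}$, away from the part of the diagram in $\Sigma\setminus D_r$, the same equivalences hold in our setting after tensoring with the rest of the diagram. The only change is that we apply our Khovanov functor from \cite{du-kh-connect-sums} rather than the planar one.

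\emph{Main obstacle.} The difficulty is that $C^*(\TT_n^\infty)$ is an infinite complex, a colimit, so a local homotopy equivalence in a finite approximation need not pass to the limit automatically. I would address this with Proposition~\ref{prop:truncation-approx}. For each homological bound $a$, there is a finite $\vec k$ such that both $CKh_{\ge a}(L)$ and $CKh_{\ge a}(L')$ are truncations of finite complexes $CKh(L(\vec k))$ and $CKh(L'(\vec k))$ of honest diagrams in $M\# S^3$. Invariance of those finite complexes is given by \cite{du-kh-connect-sums}, where the point pass and surgery wrap moves become isotopies in $M$ after twisting. The remaining work is to choose the resulting equivalences compatibly as $a\to-\infty$, which Willis handles via the stable-limit structure. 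I expect that checking compatibility to be the most delicate point, and I would transport Willis's argument verbatim. The grading shift comes from normalization: it accounts for the changes in writhe and in the $n_i$ that occur under the moves.
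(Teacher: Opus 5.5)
Your overall architecture is the paper's. You reduce to the moves of Proposition~\ref{prop:moves-sigmaq}. Moves away from $\overrightarrow{P'}$ go through the invariance results for links in $M$; your direct tensoring with the fixed $C^*(\TT_{n_i}^\infty)$ is fine there, since a homotopy equivalence of finite complexes tensored with a one-sided infinite complex is still one. Moves inside $\overrightarrow{P'}$ go through Willis's argument run locally and tensored with the part of the diagram in $\Sigma\setminus D_r$, which the paper does via $BN(L_i(\vec k))=BN(\overleftarrow{L}_i(\vec k))\otimes BN(\overrightarrow{L}_i(\vec k))$.

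The step that fails is the one you make load-bearing in your last paragraph. You claim that point pass and surgery wrap moves become isotopies in $M\# S^3$ after twisting, so that invariance of the finite complexes $CKh(L(\vec k))$ follows from \cite{du-kh-connect-sums}. The finite approximations are not link invariants. Here is an example:
\begin{itemize}
\item Let $K_1,K_2$ be oppositely oriented parallel copies of the core $S^1\times\{pt\}$ of one $S^1\times S^2$ summand.
\item Let $O$ be a small circle encircling both, drawn away from the surgery spheres and line.
\item $O$ is isotopic to a circle in $\{t\}\times S^2$ around the two points of $K_1\cup K_2$. It therefore bounds the complementary disk in $\{t\}\times S^2$, which misses $K_1\cup K_2$. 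So $K_1\cup K_2\cup O$ is isotopic to $K_1\cup K_2$ plus a split unknot.
\item After inserting $k$ full twists, the first diagram becomes $T(2,2k)$ together with its braid axis, which has linking number $\pm1$ with each component. The second becomes $T(2,2k)$ plus a split unknot.
\item These are not isotopic for any $k$. At $\vec k=\vec 0$ they are the connected sum of two Hopf links and the three-component unlink, whose Khovanov complexes are not homotopy equivalent even up to shift.
\end{itemize}
Hence whichever moves in $\overrightarrow{P'}$ realize this sweep across the $2$-sphere (a handleslide over the $0$-framed unknot in the surgery picture) neither become isotopies nor induce homotopy equivalences of the finite complexes.

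For these moves the equivalence comes from the absorption properties of the infinite twist complex. At the level of $L(\vec k)$ it is visible only in the stable homological range as $k\to\infty$. The paper's appeal to Willis's proof for every $\vec k$ has to be read in that sense too. So the finite-level route in your last paragraph cannot work as stated, and the deferred step of choosing the truncated equivalences compatibly as $a\to-\infty$ would be built on equivalences that do not exist.

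The repair is already in your second paragraph. Prove the local homotopy equivalence inside $\overrightarrow{P'}$ with $C^*(\TT_{n_i}^\infty)$ present, as Willis does. Then tensor with the finite complex $BN(\overleftarrow{L})$ of the part in $\Sigma\setminus D_r$ and apply the Khovanov functor. Done this way, the truncation bookkeeping is needed only to the extent it already appears inside Willis's argument.
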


\begin{proof}
The theorem follows once we show invariance under each of the moves in Proposition~\ref{prop:moves-sigmaq}. By Proposition~\ref{prop:truncation-approx}, it suffices to show invariance of the finite approximation complexes, that is, if $L_1$ and $L_2$ represent the same link in $M^q$, then there is a chain homotopy equivalence $CKh(L_1(\vec k))\simeq CKh(L_2(\vec k))$ for any vector $\vec k$.

A diagram move supported in $\Sigma\setminus D_r\subset \Sigma\#\overrightarrow{P'}$, including a handleslide over $\partial D_r$, induces the same diagram move from $L_1(\vec k)$ to $L_2(\vec k)$, which we view as link diagrams of links in $M\# S^3$. By \cite[Theorem 4.6]{du-kh-connect-sums}, we get a chain homotopy equivalence $CKh(L_1(\vec k))\simeq CKh(L_2(\vec k))$.

A diagram move supported in $\overrightarrow{P'}\subset \Sigma\#\overrightarrow{P'}$ induces a diagram move supported in $\overrightarrow{P}\subset\Sigma\#\overrightarrow{P}$ from $L_1(\vec k)$ to $L_2(\vec k)$. For $i=1,2$, let 
$$\overleftarrow{L}_i(\vec k)=L_i(\vec k)\cap(\Sigma\setminus D_r),\hspace{5mm}\overrightarrow{L}_i(\vec k)=L_i(\vec k)\cap\overrightarrow{P}.$$ By assumption, $\overleftarrow{L}_1(\vec k)=\overleftarrow{L}_2(\vec k)$, while $\overrightarrow{L}_1(\vec k)$ differs from $\overrightarrow{L}_2(\vec k)$ by a diagram move.

The proof of \cite[Theorem 3.9]{willis-kh} is applied locally near the diagram moves, meaning it can be extended to show $BN(\overrightarrow{L}_1(\vec k))$ is chain homotopy equivalent to $BN(\overrightarrow{L}_2(\vec k))$ up to an overall grading shift. Thus, since $BN(L_i(\vec k))=BN(\overleftarrow{L}_i(\vec k))\otimes BN(\overrightarrow{L}_i(\vec k))$, we get that $BN(L_1(\vec k))$ is chain homotopy equivalent to $BN(L_2(\vec k))$ up to an overall grading shift, and hence also the Khovanov homologies.

Lastly, finger and mirror moves across $\partial D_r=\partial\overrightarrow{P'}$ are dealt with the same way as in the proof of \cite[Theorem 3.2]{du-kh-connect-sums}.
\end{proof}

\section{Lee Homology and s-invariant}

Lee homology is a deformation of the usual Khovanov functor. It results in a chain complex that is homologically graded and quantum filtered.

Let $R$ be a commutative unital ring such that $2$ is invertible. As in \cite{bar-natan}, we will quotient the Bar-Natan category $Cob^3_{/l}$ by the additional relation that a genus $3$ surface is equal to $8$.

\begin{definition}
Let $V=\langle v_+,v_-\rangle$ be the two-dimensional graded $R$-module as before. Let $\FF'$ be the TQFT defined by $\FF'(O)=V$, $\FF'(f)=\FF(f)$ for $f$ a birth, death, or 1-1 bifurcation. If $S$ is a $1$-handle cobordism that splits circle $C$ into two circles $C_1$ and $C_2$, then 
$$\FF'(S)=\Delta':\begin{cases}
v_+\mapsto v_+\otimes w_-(S,C_2)+w_-(S,C_1)\otimes v_+ \\
w_-(S,C)\mapsto w_-(S,C_1)\otimes w_-(S,C_2)+v_+\otimes v_+.
\end{cases}$$ If $S$ is instead a $1$-handle cobordism that merges circles $C_1$ and $C_2$ into $C$, then
$$\FF(S)=m':\begin{cases}
        v_+\otimes v_+\mapsto v_+,\\ v_+\otimes w_-(S,C_2)\mapsto w_-(S,C) \\
        w_-(S,C_1)\otimes v_+\mapsto w_-(S,C),\\ w_-(S,C_1)\otimes w_-(S,C_2)\mapsto v_+.
    \end{cases}$$
\end{definition}

As in Section 4.2 of \cite{du-kh-connect-sums}, $w_-(S,C)$ is equal to $(-1)^{o(S,C)} v_-$, where the sign is determined by some additional orientation choices.

For $\LL$ a link in $M^q$ with link diagram $L$, let $CLee(L)$ be the chain complex $CLee(L)=\FF'(BN(L))$. 

\begin{proposition}
$CLee(L)$ is a chain complex.
\end{proposition}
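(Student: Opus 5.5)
The plan is to show that $\FF'$ factors through the quotient of $Cob^3_{/l}$ by the Bar-Natan local relations together with the extra relation (genus $3$ surface $=8$). Once $\FF'$ is a well-defined additive functor on $Mat(Cob^3_{/l}(M^q))$ modulo these relations, it takes the complex $BN(L)$ to a chain complex. The reason is that $BN(L)$ is already a complex in the Bar-Natan category: $d\circ d=0$ holds there up to the local relations. This is \cite[Section 3]{du-kh-connect-sums} for the $M$ part, and \cite[Theorem 2.2]{willis-kh} together with the tensor product construction for the inserted $C^*(\TT_{n_i}^\infty)$. Any functor respects composition and sums, so $\FF'(d)\circ\FF'(d)=\FF'(d\circ d)=0$. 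For the infinite twist complexes, $d^2=0$ is checked in each homological degree. By Proposition~\ref{prop:truncation-approx}, each degree involves only finitely many terms, which agree with a finite approximation $L(\vec k)$. So it suffices to treat finite diagrams in $M\#S^3$.

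The substantive step is checking that $\FF'$ is well defined, i.e.\ respects the relations. I would do this in two parts.

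\textbf{Relations among generating cobordisms.} First, merges and splits must satisfy the movie-move and commutation relations needed for the square faces of the cube of resolutions to commute (or anticommute once signs are inserted). In particular, for two 1-handles attached in either order, the composites must agree. This is the same verification as in \cite[Section 4]{du-kh-connect-sums} for $\FF$. The only new ingredient is the deformation terms $v_+\otimes v_+$ in $\Delta'$ and $m'$, which have the same form as Lee's deformation. I would run through the cases by type: merge--merge, split--split, merge--split, and the cases involving 1-1 bifurcations, which occur for nonorientable saddles on circles in the $I$-bundle pieces. The sign bookkeeping $w_-(S,C)=(-1)^{o(S,C)}v_-$ is handled exactly as for $\FF$, since the deformation terms carry no $w_-$ factors that could introduce new sign discrepancies. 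One point must be confirmed: a 1-1 bifurcation composed with the deformed maps should still produce zero on the cube face wherever $\FF$ did. This holds because $\FF'(f)=\FF(f)$ for such $f$, and the faces containing a 1-1 bifurcation have only $\FF$-type compositions on both sides, apart from possibly one deformed map.

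\textbf{Local relations.} Second, $\FF'$ must respect Bar-Natan's local relations: sphere $=0$, torus $=2$, the neck-cutting (4Tu) relation, and genus $3$ $=8$. These are computed in a ball, where $\FF'$ restricts to Lee's TQFT. There the handle operator is multiplication by $2x$ with $x^2=1$, which gives torus $=2$, genus $3=8$, and the sphere and neck-cutting relations as in \cite{bar-natan}.

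I expect the main obstacle to be the face-commutativity check when the two saddles interact through nonlocal circles in the $I$-bundle summands. The orientation-dependent signs $o(S,C)$ and the deformation terms could interact there in ways not covered by the $S^3$ computation. For these faces I would reuse the case analysis of \cite[Section 4]{du-kh-connect-sums} term by term, checking that the extra $v_+\otimes v_+$ terms cancel or match on both sides.
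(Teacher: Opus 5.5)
Your overall route (reduce to the finite approximations $L(\vec k)$, then check the square faces of the cube as was done for $CKh$) is the one the paper intends; the paper only says the case check carries over and omits it. The gap is in the one step that is genuinely new for $\FF'$, which you dismiss with two false claims.

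First, the deformation terms do carry orientation signs. The deformed part of $\Delta'$ is the $v_+\otimes v_+$ in $\Delta'(w_-(S,C))$, which in a fixed basis is $(-1)^{o(S,C)}v_+\otimes v_+$. The deformed part of $m'$ is $w_-(S,C_1)\otimes w_-(S,C_2)\mapsto v_+$, i.e.\ $v_-\otimes v_-\mapsto(-1)^{o(S,C_1)+o(S,C_2)}v_+$.

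Second, a face containing a 1-1 bifurcation can have two deformed maps on its other path, and that path must also vanish. For example, in $\R P^2$:
\begin{itemize}
\item take a trivial circle $C$;
\item let $S$ be a saddle along a chord inside the disk bounded by $C$, splitting it into $C_1\sqcup C_2$;
\item let $S'$ be a saddle along an arc from $C_1$ to $C_2$ through the cross-cap.
\end{itemize}
Performing $S'$ first gives two successive 1-1 bifurcations, so that path is zero. The other path is $m'_{S'}\circ\Delta'_S$ into the merged circle $D$. Write $w_-(S,C_i)=\epsilon_i\,w_-(S',C_i)$ with $\epsilon_i=\pm1$. Then $m'_{S'}\Delta'_S(v_+)=(\epsilon_1+\epsilon_2)\,w_-(S',D)$, exactly as for $\FF$. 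But $m'_{S'}\Delta'_S(w_-(S,C))=(1+\epsilon_1\epsilon_2)\,v_+$, whereas for $\FF$ this composite vanishes for the trivial reason $m(v_-\otimes v_-)=0$.

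So the deformation imposes a sign condition that the $CKh$ case analysis never tests. It holds here because the $v_+$ computation, already forced by the $CKh$ result, gives $\epsilon_1=-\epsilon_2$. That is the argument you need throughout: show that the signs on the deformation terms are products of signs already constrained by the $CKh$ face checks. A cleaner option is to redo the faces in the basis $a=v_++v_-$, $b=v_+-v_-$. There $m'$ and $\Delta'$ are Lee's diagonal maps $m''$, $\Delta''$ conjugated by the swaps $sw_{(S,C)}$, so each face reduces to bookkeeping of which circles get swapped along each path.

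Separately, the functor layer of your plan is neither needed nor established by the checks you propose.
\begin{itemize}
\item Since $\FF'(S)$ depends on auxiliary orientation data through $w_-(S,C)$, it is not a priori a function of the cobordism.
\item Square-face relations plus the sphere, torus, 4Tu and genus-$3$ relations do not amount to a presentation of the cobordism category of $\Sigma\times I$; isotopy relations such as birth followed by merge equalling the identity are missing.
\item A local relation in a ball can sit inside a surface whose boundary circles are non-local, so it is not literally a computation in Lee's TQFT.
\end{itemize}
None of this is necessary for the proposition. After the truncation reduction, $d\circ d$ on a finite cube is a signed sum over its square faces. So the statement is exactly the face-by-face check, carried out with the deformation signs tracked as above.
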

\begin{proof}
The proof involves a similar checking of cases to $CKh(L)$ so we omit it.
\end{proof}

From looking at the maps in the differential, it is clear that $CLee(L)$ is homologically graded and quantum filtered, with the differential increasing quantum filtration.

The following proposition and its proof are similar to \cite[Corollary 2.2]{mmsw-s-invariant}.

\begin{proposition}
For any fixed homological degree $d$, there exists $\vec k\in\Z^q$ such that $CLee(L)_{\ge d}=CLee(L(\vec k))$.
\end{proposition}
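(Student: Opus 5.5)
The argument follows the proof of Proposition~\ref{prop:truncation-approx}, i.e.\ \cite[Proposition 3.7]{willis-kh}, with $\FF'$ in place of $\FF$. The key observation is that $BN(L)$ does not depend on which TQFT we apply afterwards. The comparison between the infinite and finite twist complexes takes place entirely in the Bar-Natan category $\CC$, before any functor is applied. So I will first prove a statement purely about $BN(L)$: for each homological degree $d$ there is a finite $\vec k$ with $BN(L)_{\ge d}=BN(L(\vec k))_{\ge d}$. Then I apply $\FF'$. This functor is additive and acts termwise on chain complexes in $\CC$, so it preserves truncations, and applying it yields the claim. Here $CLee(L(\vec k))$ is to be read in the same truncated range.

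\textbf{Step 1: the local statement.} By \cite[Theorem 2.2]{willis-kh}, $C^*(\TT_n^\infty)$ is the stable colimit of the complexes $C^*(\TT_n^k)$. Concretely, for each homological degree $d'$ there is $k$ such that the truncation $C^*(\TT_n^\infty)_{\ge d'}$ agrees with $C^*(\TT_n^k)_{\ge d'}$. More precisely, it agrees with a simplified complex chain homotopy equivalent to $C^*(\TT_n^k)$, suitably shifted, and the grading normalization of the twist complex is chosen so that the shift is built in. The lowest homological degree of the remaining part of the diagram is bounded below by a constant depending only on the crossings of $L(\vec 0)$ outside the twist regions.

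\textbf{Step 2: the tensor product.} $BN(L)$ is the planar algebra tensor product of $BN(L(\vec 0)\setminus\text{insertion points})$ with the $q$ complexes $C^*(\TT_{n_i}^\infty)$. The degree-$\ge d$ part of a tensor product of complexes that are bounded below only involves the degree-$\ge d-c$ parts of each factor, for a constant $c$ computed from the lower bounds of the other factors. So choosing each $k_i$ large enough for the threshold $d-c$ makes the truncations agree.

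\textbf{Step 3: the main obstacle.} The hard part is making the word ``equal'' honest when passing from the finite twist complexes to the infinite one. Willis's identification is really an identification of truncations after Gaussian elimination and simplification, together with a check that the differentials match in the stable range. The differential of $BN(L)$ also includes maps between the degree-$(d-c)$ and lower levels of the factors, and these are cut off by the truncation. I would check that the truncated differential in degrees $\ge d$ only involves components of each factor's differential in degrees $\ge d-c$, which holds because the tensor differential is the sum of the factor differentials. With that, the statement for $BN$ follows, and applying $\FF'$ completes the argument. This is the same check as in \cite[Corollary 2.2]{mmsw-s-invariant}, and the choice of TQFT plays no role.
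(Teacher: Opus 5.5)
Your strategy is the one the paper intends. The paper only says the proof is similar to \cite[Corollary 2.2]{mmsw-s-invariant}, i.e.\ Willis's truncation argument carried out in the Bar-Natan category and followed by $\FF'$. Your reduction to $BN(L)$, the termwise application of $\FF'$, and your caveat that the comparison is with a truncation of the \emph{simplified} complex $CLee^\#(L(\vec k))$ are all fine. However, Step 2, the only step with real content, has the degree bookkeeping on the wrong side, and as written it fails. You truncate to degrees $\ge d$, and in $(A\otimes B)_m=\bigoplus_{i+j=m}A_i\otimes B_j$ the condition $i+j\ge d$ bounds $i$ from below only through an \emph{upper} bound $j\le t_B$. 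Lower bounds on the other factors give nothing. A threshold $d-c$ with $c$ built from lowest degrees is too high: summands $A_i\otimes B_j$ with $i$ below your threshold and $j$ near the top of $B$ still lie in total degree $\ge d$. So your choice of $k_i$ does not guarantee that the truncations agree.

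The fact that makes the proposition true is that, in the normalization used here (null-homologous orientation), each $C^*(\TT_{n_i}^\infty)$ is bounded \emph{above} and infinite \emph{downward}. The finite twists stabilize from the top, in degrees $\ge -2k+1$; this is exactly the sequence of inclusions $CLee^\#(\TT_{n})_{\ge -1}\hookrightarrow CLee^\#(\TT_{n}^2)_{\ge -3}\hookrightarrow\cdots$ in Section 4. For example, with $n=2$ and antiparallel strands, $\TT_2^k$ has $2k$ negative crossings and its complex sits in degrees $[-2k,0]$. If these complexes were bounded below and infinite upward, as your wording suggests, then $BN(L)_{\ge d}$ would itself be infinite and no finite $\vec k$ could capture it.

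The correct Step 2 goes as follows. Let $t_0$ be the highest degree of the finite part $BN(L(\vec 0)\setminus\text{insertion points})$, and let $t_j$ be the highest degree of $C^*(\TT_{n_j}^\infty)$. Then $BN(L)_{\ge d}$, differential included, involves only $C^*(\TT_{n_i}^\infty)_{\ge d-c_i}$ with $c_i=t_0+\sum_{j\ne i}t_j$. You then choose each $k_i$ so that degree $d-c_i$ lies in the stable range of the $i$-th twist. Correspondingly, in Step 1 the relevant quantity for the rest of the diagram is its highest, not its lowest, homological degree. With these corrections your argument is the intended one.
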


\begin{proposition}
The homotopy type of $CLee(L)$ is an invariant of the link.
\end{proposition}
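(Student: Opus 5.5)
We will prove that $CLee(L)$ is a link invariant by following the proof of invariance of $CKh(L)$ given above, with the Khovanov functor $\FF$ replaced by the Lee TQFT $\FF'$. The central observation is that every chain homotopy equivalence used for $CKh$ is built at the level of the Bar-Natan category $\CC(M,B)$, and is then pushed through a functor. Here $\CC(M,B)$ is quotiented by the extra relation (genus $3$ surface $=8$), which holds under $\FF'$ because $2$ is invertible and the relevant handle operator squares to $4$. Since $\FF'$ is a functor on this quotient category, it sends Bar-Natan homotopy equivalences to chain homotopy equivalences of filtered complexes.

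First we check moves supported in $\Sigma\setminus D_r$, including handleslides over $\partial D_r$. For these we invoke \cite[Theorem 4.6]{du-kh-connect-sums} applied to the finite approximations $L_1(\vec k)$ and $L_2(\vec k)$. We must confirm that the homotopy equivalences there are given by cobordisms, i.e. morphisms in $\CC$, using only relations that $\FF'$ respects. For finger and mirror moves across $\partial D_r$, we repeat the argument of \cite[Theorem 3.2]{du-kh-connect-sums}, but with the signs $w_-(S,C)=(-1)^{o(S,C)}v_-$ now entering through the deformed maps $\Delta'$ and $m'$. For moves in $\overrightarrow{P'}$, Willis's argument \cite[Theorem 3.9]{willis-kh} gives equivalences $BN(\overrightarrow{L}_1(\vec k))\simeq BN(\overrightarrow{L}_2(\vec k))$ in the Bar-Natan category, up to grading shift. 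We then tensor with the unchanged piece $BN(\overleftarrow{L}_i(\vec k))$ and apply $\FF'$.

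Next we pass from the finite approximations to the infinite complex. By the preceding proposition, for each homological degree $d$ there is a $\vec k$ with $CLee(L)_{\ge d}=CLee(L(\vec k))$ in that range. The equivalences above are compatible with the stabilization maps $C^*(\TT^k_n)\to C^*(\TT^{k+1}_n)$, since they are local moves away from the inserted twists, or can be isotoped away from them. Hence they assemble into an equivalence of the truncated complexes, and then, as in \cite[Corollary 2.2]{mmsw-s-invariant}, into an equivalence of $CLee(L_1)$ and $CLee(L_2)$. The filtration shifts are the same overall grading shifts as in the Khovanov case.

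We expect the main obstacle to be the point-pass and surgery-wrap moves. Willis's proof of invariance of $C^*(\TT_n^\infty)$ under these moves uses that the infinite twist complex absorbs crossings and twists. That is a statement in the Bar-Natan category, so it transfers to $\FF'$ once we check that Willis works in Bar-Natan's category without imposing extra relations incompatible with the genus-$3$ relation. We would verify this first. If it holds, the result follows by functoriality. If not, we would fall back on the quotient by the Lee relation directly, using the fact that $C^*(\TT_n^\infty)$ is a colimit and that the stable homotopy equivalences survive any further quotient of the morphism spaces.
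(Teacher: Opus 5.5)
Your proposal is correct and is essentially the paper's argument, which the paper omits as ``mostly the same as in the case of $CKh(L)$'': rerun the move-by-move invariance proof on the finite approximations $L(\vec k)$ with $\FF'$ in place of $\FF$, using that the equivalences from \cite{du-kh-connect-sums} and \cite{willis-kh} respect the local relations together with the genus-$3$ relation, and then pass to $CLee(L)$ via the truncation proposition. Your fallback in the last paragraph is not needed, and it would not actually help, since further quotienting cannot repair a relation that is incompatible with the Lee relation; the main line of your argument does not depend on it.
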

\begin{proof}
The proof is mostly the same as in the case of $CKh(L)$ so we omit it.
\end{proof}

From now on, consider Lee homology with coefficients in a field $\F$ where $2$ is invertible. We do a change of basis of $V$ and write $a=v_++v_-$ and $b=v_+-v_-$. Then $V$ is freely generated as an $\F$-vector space by $a$ and $b$. The effect of the Gabrov\v sek signs is to swap the roles of $a$ and $b$. More precisely, for $S$ a $1$-handle cobordism and $C$ a boundary component of $S$, let $sw,sw_{(S,C)}:V\to V$ be the linear maps given by $$sw(x)=\begin{cases}
    b & x=a \\
    a & x=b
\end{cases}$$ and $$sw_{(S,C)}(x)=\begin{cases}
    x & \text{if $C$ is locally consistently oriented with $S$} \\
    sw(x) & \text{otherwise.}
\end{cases}$$ For $C_1,\dots,C_n$ (distinct) boundary components of $S$, let $sw_{(S,C_1,\dots,C_n)}:V^{\otimes n}\to V^{\otimes n}$ be defined by
$$sw_{(S,C_1,\dots,C_n)}(x_1\otimes\cdots\otimes x_n)=sw_{(S,C_1)}(x_1)\otimes\cdots\otimes sw_{(S,C_n)}(x_n).$$ The Lee split and merge maps are given by 
\begin{align*}
    m'(x\otimes y) &= sw_{(S,C)}(m''(sw_{(S,C_1)}(x)\otimes sw_{(S,C_2)}(y)) \\
    \Delta'(x) &= sw_{(S,C_1,C_2)}(\Delta'(sw_{(S,C)}(x))),
\end{align*} where 
$$m'':\begin{cases}
    a \otimes a \mapsto 2a \\
    a \otimes b \mapsto 0 \\
    b \otimes a \mapsto 0 \\
    b \otimes b \mapsto 2b
\end{cases}\hspace{5mm} \Delta'':\begin{cases}
    a\mapsto a\otimes a \\
    b\mapsto -b\otimes b.
\end{cases}$$

For now, consider links $\LL\subset M$. 

\begin{definition}
A connected component of the projection of a link diagram $L$ to $\Sigma$ is called a projection component.
\end{definition}

\begin{definition}\label{def-basepoints}
Given a link diagram $L$, a set of oriented basepoints $P=\{\vec p_1,\dots,\vec p_l\}$ for $L$ consists of:
\begin{enumerate}
    \item A point $p_i$ on each projection component of $L$, and
    \item A choice of local orientation for the link at each point $p_i$.
\end{enumerate}
\end{definition}

Let $P=\{\vec p_1,\dots,\vec p_l\}$ be a set of oriented basepoints for $L$ as in Definition~\ref{def-basepoints}. For an oriented link diagram $(L,o)$, take the oriented resolution $L_o$, considered as a diagram of oriented circles. Consider the circles as vertices, and for every crossing of $L$, put an edge between the arcs of $L_o$ that it adjoins. Then the connected components of the resulting graph are the projection components of $L$.

\begin{definition}\label{def-lee-generator}
Given a crossingless diagram $D$ of oriented circles and edges with a set of oriented basepoints $P$, the Lee generator $\frak s(D,P)$ is constructed as follows: start by labeling each circle in $D$ containing a basepoint $p_i$ with $a$ if its orientation agrees with the local orientation at $p_i$, and $b$ otherwise. Then, there exists exactly one way to label all the remaining circles of $D$ with either $a$ or $b$ such that no two circles that share an edge have the same label.
\end{definition}

The Lee generator $s(L,o,P)$ for the oriented link diagram $(L,o)$ with set of oriented basepoints $P$ is $\frak s(L,o,P)=\frak s(L_o,P)$. 

Let $o$ and $o'$ be two distinct orientations for $L$, then the oriented resolutions $L_o$ and $L_{o'}$ are distinct: either they are distinct as diagrams of unoriented circles, or $L_o$ and $L_{o'}$ have the same set of circles but there exists a circle such that $L_o$ and $L_{o'}$ differ on its orientation. In the later case, $L_o$ and $L_{o'}$ must differ by their orientations on a basepointed circle. Therefore, $\frak s(L,o,P)$ and $\frak s(L,o',P)$ are distinct. 

These elements are called the Lee generators for $(L,o)$ since they are generators of the Lee homology, as explained in the following lemma.

\begin{lemma}\label{lem:lee-generator}
The rank of $HLee(L)$ for $\LL$ an $n$-component link is at least $2^n$.
\end{lemma}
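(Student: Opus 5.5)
Following Lee's original argument adapted to the twisted setting, we will show that each Lee generator $\frak s(L,o,P)$ is a cycle in $CLee(L)$, and that the $2^n$ cycles obtained from the $2^n$ orientations $o$ of $L$ are linearly independent in homology. This section works with links $\LL\subset M$, so $CLee(L)$ is an honest finite cube of resolutions and no truncation argument is needed.

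\textbf{Cycle condition.} We consider a crossing $c$ of $L$ at which $L_o$ is resolved. Because $L_o$ is the oriented resolution, the outgoing edge map at $c$ merges or splits circles of $L_o$. If it merges two circles $C_1\neq C_2$, these circles share an edge of the graph, so they carry opposite labels up to the swap maps $sw_{(S,\cdot)}$. We must check that this labeling, after the signs, corresponds under $m''$ to $a\otimes b$ or $b\otimes a$, which $m''$ sends to $0$. The point is that Definition~\ref{def-lee-generator} labels circles by comparing the orientation of each circle with a basepoint orientation propagated along edges. The Gabrov\v{s}ek signs $sw_{(S,C)}$ record exactly whether the circle's orientation is locally consistent with the saddle $S$. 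Two circles of the oriented resolution meeting at a crossing are oppositely oriented relative to the saddle.

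If instead the map is a split, or a 1-1 bifurcation, which occurs in nonorientable-surface summands, we must show $\Delta''$ (respectively the 1-1 map) kills the label. For a 1-1 bifurcation, $\FF$ is zero, as in Khovanov homology over such surfaces. A saddle from the oriented resolution that splits one circle into two cannot occur, because the orientation would force incompatibility; this is the analogue of the fact in \cite{du-kh-connect-sums} that oriented-resolution saddles are non-orientable there. Hence $d\,\frak s=0$. The main obstacle I expect is this sign bookkeeping: verifying that the relative labeling rule (alternate across edges) is compatible with the $sw$ conventions for every type of saddle. This includes saddles passing through separating spheres, where the local orientation of a circle relative to $S$ can flip. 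I would first reduce to a local model near a crossing using the description $w_-=(-1)^{o(S,C)}v_-$.

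\textbf{Independence.} Fix an order on the orientations. The generators $\frak s(L,o,P)$ for distinct $o$ are distinct pure tensors in the $a,b$ basis, and they live in resolutions $L_o$ that either are different vertices of the cube or are the same vertex with a different label on some basepointed circle, as noted just before the lemma. To pass to homology, I would follow Rasmussen and Lee and argue that these classes are nonzero and independent. The plan is to show that $CLee(L)$ is chain homotopy equivalent, over $\F$ with $2$ invertible, to a complex whose homology is spanned by these classes. A more direct route is the following. Any boundary $dx$ has, in each vertex, components lying in the image of $m''$ or $\Delta''$ from adjacent vertices. We then show that a linear combination $\sum_o \lambda_o\frak s(L,o,P)$ cannot be a boundary by pairing with a dual cocycle. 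Concretely, we apply the same construction to the mirror $\bar L$ with the dual basis. The pairing between $CLee(L)$ and $CLee(\bar L)^*$ gives Lee cocycles $\frak s^*(o)$ with $\langle\frak s^*(o),\frak s(o')\rangle=\delta_{oo'}\cdot(\text{unit})$, and these cocycles vanish on boundaries. This yields $2^n$ independent classes, so $\operatorname{rank}HLee(L)\ge 2^n$.
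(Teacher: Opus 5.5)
\textbf{Verdict: genuine gap.} Your outline has the same shape as the paper's proof, which asserts by analogy with Lee that the $2^n$ elements $\frak s(L,o,P)$ are cycles but not boundaries. The gap is in the cycle step, which carries essentially all of the content. The paper also only cites Lee for this step, but a proof must supply it.

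You never show that a merge out of $L_o$ kills $\frak s(L,o,P)$ once the maps $sw_{(S,C)}$ are inserted. You call this the main obstacle and then conclude ``Hence $d\,\frak s=0$'' anyway. Worse, the mechanism you describe gives the wrong parity:
\begin{itemize}
\item The band taking the oriented smoothing to the other smoothing is incoherent with the link orientation. So for either orientation of the pair of pants $S$, exactly one of $C_1,C_2$ has its $L_o$-orientation agreeing with the boundary orientation induced by $S$.
\item If $sw_{(S,C)}$ measured that agreement, as you suggest, exactly one tensor factor would be swapped.
\item The alternating labels $a\otimes b$ of Definition~\ref{def-lee-generator} would then become $b\otimes b$ or $a\otimes a$, and $m''$ would return $2b$ or $2a\neq 0$.
\end{itemize}
So the swaps must be measured against the auxiliary orientations used to define $w_-(S,C)$. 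What you need is an actual lemma: on every merge edge out of $L_o$, an even number of $C_1,C_2$ is swapped. Equivalently, the swap pattern must be compatible with alternation around every cycle of the edge graph.

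This is not a formality. On a torus, take three parallel, equally oriented essential circles joined cyclically by three positive crossings. They form an odd cycle in the edge graph, so no alternating labeling exists unless swaps intervene. Without swaps, the degree-$0$ differential of that diagram is injective. Your remarks that no split leaves $L_o$ and that $1$-$1$ bifurcations act by $0$ are fine. However, $1$-$1$ bifurcations also occur over orientable surfaces, not only in nonorientable summands.

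Your independence argument is a legitimate, more explicit alternative to the paper's ``not in the image.'' It rests on the same check, though, and the mirror is unnecessary. Use the vertexwise form $\langle v_+,v_-\rangle=1$, $\langle v_\pm,v_\pm\rangle=0$. This gives $\langle a,a\rangle=2$, $\langle b,b\rangle=-2$, $\langle a,b\rangle=0$. Since $2$ is invertible and distinct orientations give distinct pure tensors, $\langle\frak s(L,o,P),\frak s(L,o',P)\rangle$ is $\delta_{oo'}$ times a unit.

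The functional $\langle\frak s(L,o,P),-\rangle$ kills boundaries exactly when the transpose of each edge map into $L_o$ kills $\frak s(L,o,P)$. Those edges are splits or $1$-$1$ maps at the crossings where $L_o$ uses the $1$-smoothing. Their transposes are (up to sign) merges out of $L_o$ with the same swap pattern, so the same parity condition reappears there.

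Also, $sw$ is anti-self-adjoint for this form: $\langle sw\,x,y\rangle=-\langle x,sw\,y\rangle$. So your identification of $CLee(\bar L)$ with $CLee(L)^*$ requires checking the resulting signs against the cube's sign assignment, which your sketch asserts rather than proves. Once the local parity lemma is proved, both halves of your argument go through.
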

\begin{proof}

Fix a diagram $L$ and a set of basepoints with local orientations $P$. The $2^n$ orientations $o$ of $L$ give $2^n$ distinct elements $\frak s(L,o,P)$ in $CLee(L)$. By the same argument as in \cite{lee-hlee}, these chain elements are in the kernel of the Lee differential and not in the image, hence they generate $2^n$ distinct nonzero homology classes in $HLee(L)$.
\end{proof}

\begin{theorem}\cite[Theorem 4.2]{lee-hlee}
The dimension of $HLee(\LL)$ for a link $\LL$ of $n$ components equals $2^n$.
\end{theorem}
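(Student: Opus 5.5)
Lemma~\ref{lem:lee-generator} already gives $\dim HLee(L)\ge 2^n$, so the plan is to prove the matching upper bound $\dim HLee(L)\le 2^n$. We first treat $\LL\subset M$ (no $S^1\times S^2$ summands) and then reduce the general case to it.

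For $\LL\subset M$ we pass to the $a,b$ basis. There the Lee split and merge maps are conjugates, by the swap maps $sw_{(S,C)}$, of the diagonal maps $m''$ and $\Delta''$. Conjugation changes which label counts as ``$a$'' on a given circle, but each map is still diagonal: a merge kills $x\otimes y$ unless the labels match after swapping, and a split sends a labeled circle to a single pure tensor. Following Lee's original argument and Rasmussen's reformulation, we filter $CLee(L)$ by the number of circles, or equivalently we run Bar-Natan's delooping and Gaussian elimination with the idempotents $\tfrac12(1\pm \text{dot-type})$. The key step is to show that a resolution-label pair $(r,x)$ in which two circles meeting at a crossing carry ``compatible'' labels cancels against its neighbor through that crossing. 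Here compatible means the labels would merge nontrivially after the swaps. This cancellation pairs off every generator except those that are cycles under all edge maps. Such a state must have no crossing at which the edge map is nonzero. As in Lee, this forces every crossing to be resolved in the oriented way for some orientation $o$, with labels alternating across each crossing edge. By Definition~\ref{def-lee-generator}, a state of this kind is determined by $o$ once we fix the label on a basepointed circle of each projection component. Two labelings that differ by globally swapping a projection component are related through the orientation of that component, which accounts for the sign and orientation ambiguity. We therefore get at most $2^n$ surviving generators, hence $\dim HLee(L)\le 2^n$.

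To run this cancellation carefully we use the twisted-coefficient idempotents. For each circle $C$ in a resolution, decompose $V=\langle a\rangle\oplus\langle b\rangle$. The cube then splits as a direct sum over labelings, and each edge map becomes either an isomorphism or zero between the summands. This is exactly Lee's setup, so the contraction argument goes through verbatim, provided we check that the swaps $sw$ only relabel the summands and never mix them. That check is immediate from the formulas.

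For general $\LL\subset M^q$ we reduce to the previous case. By the proposition preceding the invariance statement, $CLee(L)_{\ge d}=CLee(L(\vec k))$ for suitable $\vec k$, and $L(\vec k)$ is a link in $M\#S^3\cong M$ with the same number of components. Applying the first part shows that $HLee(L(\vec k))$ has dimension $2^n$ and is concentrated in the homological degrees of its Lee generators. The main obstacle is controlling these degrees as $\vec k\to\infty$. The Lee generators of $L(\vec k)$ sit in degrees given by linking numbers between differently oriented sublinks through the twist regions. For the homology of the colimit to stabilize, we need the generators in a fixed degree window to persist under the twist maps $C^*(\TT^{k}_{n})\to C^*(\TT^{k+1}_{n})$. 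We would follow \cite{mmsw-s-invariant}: for even links, only orientations with algebraic intersection zero with each surgery sphere have degrees that stay bounded. The remaining generators drift to $-\infty$ and disappear from any truncation. This step needs care: the count of surviving generators is then $\le 2^n$, which together with Lemma~\ref{lem:lee-generator} forces the surviving generators to be exactly the $2^n$ classes produced by that lemma. Matching these two counts is the delicate point, and we would check it by comparing directly with the generators constructed in Lemma~\ref{lem:lee-generator} via the truncation isomorphism.
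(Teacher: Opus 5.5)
\textbf{Part 1: links in $M$.} This is all the statement covers: the paper states it right after ``For now, consider links $\LL\subset M$'' and proves it only by citing Lee. Your argument is a state-level cancellation in the spirit of Bar-Natan--Morrison, not Lee's proof, so nothing carries over ``verbatim''. The route is viable, but its decisive step is missing.

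In the $a,b$ basis, each edge map sends a labeled resolution either to $0$ or to a unit multiple of a single labeled resolution, injectively on the ones it does not kill. So $CLee(L)$ splits into subcomplexes indexed by the connected components of the graph of nonzero matrix entries. It does not split ``over labelings''. Nothing in your sketch prevents such a component from carrying homology without being a single oriented resolution; for example, two states could map onto one state through different crossings.

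What you need is a persistence lemma. Call a crossing active at a state $s$ if its outgoing edge map is nonzero, or if $s$ is hit through it. The lemma: if $s\to s'$ is a nonzero edge through a crossing $c$, then every other crossing is active at $s$ iff it is active at $s'$. Given this, each non-isolated component is a cube of isomorphisms, hence acyclic (it is the cone of an isomorphism). Then $\dim HLee(L)$ is the number of isolated states.

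In the plane, persistence follows from ``inactive iff the two arcs lie on distinct circles with distinct labels''. On a surface that criterion fails. A crossing whose two arcs lie on the same circle can be a 1-1 bifurcation, which is zero for every labeling. Also, nesting parity, which Lee uses to match labels with orientations, is not available.

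One clean fix is to read each label as an orientation of its circle, via the reference orientations entering $sw_{(S,C)}$. Then a crossing is active exactly when its band is coherent with these orientations. A nonzero edge changes no arc orientation away from its own band, which gives persistence. It also shows that the isolated states are exactly the resolutions $L_o$ with circles oriented by $o$, so there are exactly $2^n$ of them.

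\textbf{Part 2: drop the $M^q$ half.} Besides being outside the statement, the claim there is false. For even links in $M^q$, only orientations with zero algebraic intersection with every surgery sphere give Lee generators. These are precisely the ones your truncation argument keeps. So the dimension is the number of such orientations (cf.\ \cite{mmsw-s-invariant}), which can be smaller than $2^n$. For instance, two parallel copies of $S^1\times\{pt\}$ in $S^1\times S^2$ have $n=2$ but only two such orientations.

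Your last step also invokes Lemma~\ref{lem:lee-generator}, which is proved only for diagrams in $M$, to force $2^n$ survivors. You do this right after discarding the unbalanced generators, so the count contradicts itself.
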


The proof is the same as in \cite{lee-hlee}.

It is easily observed that the homological degrees of all Lee generators are $0$, thus $HLee(\LL)$ is supported in homological degree $0$.

%Since $HLee(L)$ is an invariant of the link, so are the following two quantities:
%$$s_{\min}(L)=\min\{q(x):x\in HLee(L), x\ne 0\},$$
%$$s_{\max}(L)=\max\{q(x):x\in HLee(L), x\ne 0\}.$$

%\cite{rasmussen-s-inv} showed that for knots $K$ in $S^3$, $$s_{\max}(K)=s_{\min}(K)+2.$$ The same argument also holds for knots in $M$, and we also define $$s(K)=s_{\max}(K)-1=s_{\min}(K)+1.$$

\subsection{Cobordism Maps}

Let $\Sigma$ be an oriented cobordism in $S^3\times I$ between oriented links $L_0$ and $L_1$ in $S^3$, then $\Sigma$ induces a map $\phi_\Sigma:HLee(L_0)\to HLee(L_1)$ that is filtered of degree $\chi(\Sigma)$. Furthermore, if $\Sigma$ is weakly-connected in the sense that every component of $\Sigma$ has a boundary component in $L_0$, then $\phi_\Sigma$ preserves Lee generators, that is, $\phi_\Sigma([\frak s_{o_0}(L_0)])$ is a nonzero multiple of $[\frak s_{o_1}(L_1)]$ \cite[Proposition 4.1]{rasmussen-s-inv}\cite[Equation (7)]{bw-categorification}. 

The same is true for cobordisms in $M\times I$ between links in $M$, that is, an oriented cobordism $\Sigma$ in $M\times I$ between oriented links $L_0$ and $L_1$ in $M$ induces a map $\phi_\Sigma:HLee(L_0)\to HLee(L_1)$ that is filtered of degree $\chi(\Sigma)$, and if $\Sigma$ is weakly-connected, then $\phi_\Sigma$ maps $\frak s(L_0,o_0,P_0)$ to a nonzero multiple of $\frak s(L_1,o_1,P_1)$ for suitable choices of $P_0$ and $P_1$. 

We break cobordisms into elementary parts: morse moves and diagram moves. For $0$ and $2$-handle attachments, the rest of the link remains the same, and the circle that is being created/capped off has a basepoint on it with either local orientation. For an oriented $1$-handle attachment, if the saddle does not change the number of projection components, then the basepoint is chosen away from the attaching region. Otherwise, suppose the saddle joins two disjoint projection components into one, then put the two basepoints in $L_0$ on the feet of the saddle with local orientations induced by the orientation of the saddle, and put one basepoint in $L_1$ anywhere on the saddle with local orientation induced by the orientation of the saddle as in Figure~\ref{fig:saddle-loc-orientation}. The placement of the basepoints is similar if the saddle separates one link component into two disjoint links. The construction of cobordism maps for the $0,1,$ and $2$-handle attachment moves and Reidemeister moves is the same as for the $S^3$ case and satisfy the desired properties.

\begin{figure}
    \centering
    \includegraphics[width=0.6\textwidth]{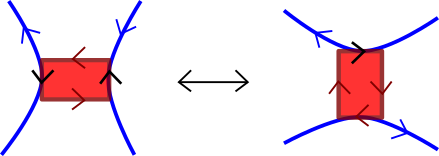}
    \caption{Basepoints and local orientations (shown in black) on a $1$-handle attachment (shown in red) that changes projection components of a link (shown in blue).}
    \label{fig:saddle-loc-orientation}
\end{figure}

For finger and mirror moves, put the basepoint away from the region where the move is happening. Finger and mirror moves induce isomorphisms preserving Lee generators.

For Reidemeister moves, the cobordism maps are the same as in the $S^3$ case. We detail how to choose the basepoints as follows:

For a Reidemeister I move, put the basepoint away from the region where the move is happening. 

For a Reidemeister II move, if the move does not change the number of projection components, put the basepoint away from the region where the move is happening. Otherwise, suppose $L_1$ is obtained from $L_0$ by performing a Reidemeister II move that moves an arc of the link over another arc in a different projection component, creating two new crossings. Then two basepoints are placed on the two arcs where the move occurs in $L_0$ and one basepoint is placed in the region where the move occurs in $L_1$, with local orientations as in Figure~\ref{fig:r2-loc-orientation}.

\begin{figure}[h]
    \centering
    \includegraphics[width=0.6\textwidth]{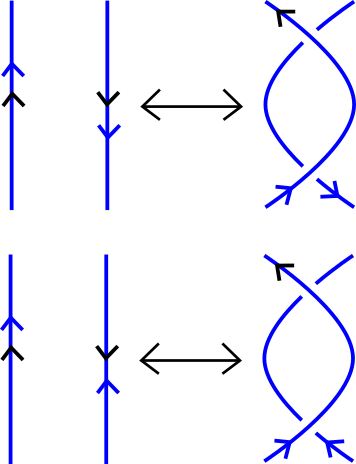}
    \caption{Basepoints and local orientations for a Reidemeister II move that changes projection components. In the top row, the two strands of the link are oriented oppositely, and in the bottom row, they are oriented the same way.}
    \label{fig:r2-loc-orientation}
\end{figure}

A Reidemeister III move occurs only within a projection component, so put the basepoint away from the region where the move is happening. 

For a handleslide move between links $L_0$ and $L_1$, choose a sequence of moves as in the proof of handleslide invariance of Section 4.3 of \cite{du-kh-connect-sums} that induce homotopy equivalences on Lee homology. The cobordism map is the composition of these homotopy equivalences. The placement of basepoints for each of these moves is similar to the placement of basepoints in the moves described above, except for the move that changes the homotopy class of a strand, where the basepoint is placed away from where the move is happening. 

Note that the map depends on the choice of sequence of moves, and we do not know if it is well-defined up to homotopy. However, it is enough for our applications to know that it exists.

\subsection{s-invariant}

By the argument of \cite{bw-categorification}, for any oriented link diagram $(L,o)$, the quantum degrees of $\frak s(L,o,P)+\frak s(L,\overline o,P)$ and $\frak s(L,o,P)-\frak s(L,\overline o,P)$ differ by exactly two. Analogously to \cite{bw-categorification}, we define the Rasmussen invariant $s(L)$ of the oriented link diagram $L$ to be

\begin{equation}\label{eq:s-inv}
    s(L,o,P)=\frac{q(\frak s(L,o,P)+\frak s(L,\overline o,P))+q(\frak s(L,o,P)-\frak s(L,\overline o,P))}{2}.
\end{equation}

\begin{proposition}\cite[Corollary 3.6]{rasmussen-s-inv}\label{prop:min-s-grading}
$$q(\frak s(L,o,P))=q(\frak s(L,\overline o,P))=s(L,o,P)-1.$$
\end{proposition}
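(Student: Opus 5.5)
Write $\frak s_o=\frak s(L,o,P)$ and $\frak s_{\overline o}=\frak s(L,\overline o,P)$. I would follow Rasmussen's argument for \cite[Corollary 3.6]{rasmussen-s-inv}, using the mod $4$ structure of quantum gradings together with a conjugation symmetry exchanging $\frak s_o$ and $\frak s_{\overline o}$.

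\textbf{Step 1: the two filtration levels.} The set $\{\frak s_o+\frak s_{\overline o},\ \frak s_o-\frak s_{\overline o}\}$ spans the same space as $\{\frak s_o,\frak s_{\overline o}\}$, namely $\frak s_o=\tfrac12[(\frak s_o+\frak s_{\overline o})+(\frak s_o-\frak s_{\overline o})]$, and $2$ is invertible. Here $q(\cdot)$ denotes the filtration degree of a homology class, i.e.\ the maximum filtration level over its representatives. Since $q$ of a sum is at least the minimum of the $q$'s of the summands,
$$q(\frak s_o)\ge\min\{q(\frak s_o+\frak s_{\overline o}),\,q(\frak s_o-\frak s_{\overline o})\}.$$
By the $\pm$ decomposition recalled just before (\ref{eq:s-inv}), these two degrees are $s-1$ and $s+1$ in some order, where $s=s(L,o,P)$. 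Hence $q(\frak s_o)\ge s-1$.

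\textbf{Step 2: the reverse inequality.} Suppose $q(\frak s_o)\ge s+1$. I would produce a filtered involution $\iota$ of $CLee(L)$ with $\iota(\frak s_o)=\pm\frak s_{\overline o}$, so that $q(\frak s_{\overline o})\ge s+1$ as well. The natural candidate is the map $v_-\mapsto -v_-$, which exchanges $a$ and $b$; the point is that the labels of $\frak s_{\overline o}$ are exactly the swapped labels of $\frak s_o$, since reversing $o$ flips every basepointed label and the alternating rule propagates. Then $\frak s_o\pm\frak s_{\overline o}$ would both have filtration degree at least $s+1$, contradicting the fact that one of them has degree exactly $s-1$. So $q(\frak s_o)=s-1$, and applying $\iota$ gives $q(\frak s_{\overline o})=s-1$ too.

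\textbf{Main obstacle: checking that $\iota$ is a filtered chain map in this setting.}
- With the Gabrov\v sek signs, the Lee maps are conjugated by $sw_{(S,C)}$. The candidate $\iota$ acts as $\mathrm{id}$ on $v_+$ and $-1$ on $v_-$, i.e.\ as $sw$ in the $a,b$ basis.
- So I must verify that $\iota$ commutes with $m''$ and $\Delta''$ up to an overall sign per homological degree. Since $\Delta''$ is asymmetric ($b\mapsto -b\otimes b$), I expect the sign $(-1)^{\text{homological degree}}$ or a sign depending on the number of circles.
- I also need that $\iota$ commutes with the swap $sw$, which is automatic.
- Since $\iota$ only rescales the $v_\pm$ basis vectors by $\pm1$, it preserves the quantum filtration.
- If the sign bookkeeping fails, the fallback is Rasmussen's mod-$4$ argument. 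Decompose $CLee$ by quantum grading mod $4$ relative to the base grading. Then $\frak s_o\pm\frak s_{\overline o}$ lie in complementary mod-$4$ summands, since the $\pm$ combinations separate the parity of the number of $v_-$'s. In that case the filtration degree of $\frak s_o$ is the minimum of the two levels, namely $s-1$.
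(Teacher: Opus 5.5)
Your fallback (the mod-$4$ argument) is exactly the argument behind the cited \cite[Corollary 3.6]{rasmussen-s-inv}, and it should be your main proof. The involution route in Step 2 does not work as you set it up.

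\textbf{Why the involution fails.} The map $\iota$ acting by $sw$ on every circle commutes with merges but anticommutes with splits. Indeed $m''\circ(sw\otimes sw)=sw\circ m''$, whereas $\Delta''(sw(a))=\Delta''(b)=-b\otimes b=-(sw\otimes sw)(\Delta''(a))$. Conjugating by the Gabrov\v sek swaps $sw_{(S,C)}$ changes nothing, since they commute with $sw$.

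Neither of your proposed sign corrections repairs this. Let $r$ be the height and $c$ the number of circles of a resolution. Both $(-1)^r$ and $(-1)^c$ change sign along every merge and every split, but you need a sign that is preserved along merges and flips along splits. A sign that works is $(-1)^{(c+r-c_0-r_0)/2}$ when there are no $1$-$1$ bifurcations. The twisted map is then just $+1$ on generators with $q\equiv j$ and $-1$ on those with $q\equiv j+2 \pmod 4$. So the involution route is the mod-$4$ splitting in disguise and cannot bypass it.

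A smaller point: in Step 2 you should suppose $q(\frak s_o)>s-1$ rather than $q(\frak s_o)\ge s+1$. Otherwise you need a parity argument to exclude $q(\frak s_o)=s$.

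\textbf{What the fallback needs.} State explicitly the two facts it rests on.
\begin{enumerate}
\item Every component of the Lee differential has quantum degree exactly $0$ or $4$; the only non-Khovanov terms are $w_-\otimes w_-\mapsto v_+$ and $w_-\mapsto v_+\otimes v_+$. The Gabrov\v sek signs only rescale basis vectors by $\pm1$, and the $1$-$1$ bifurcation maps are zero. Hence $CLee(L)=\bigoplus_{j\in\Z/4}C_j$ splits into subcomplexes spanned by generators, compatibly with the filtration.
\item For cycles $x\in C_j$ and $y\in C_{j+2}$ this forces $q([x+y])=\min\{q([x]),q([y])\}$. The reason is that the $C_j$- and $C_{j+2}$-components of any representative of $[x+y]$ at filtration level $\ge p$ are representatives of $[x]$ and $[y]$ at level $\ge p$.
\end{enumerate}

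Your observation that $\frak s_o\pm\frak s_{\overline o}$ live on the single resolution $L_o$ and separate terms by the parity of the number of $v_-$'s puts them in $C_j$ and $C_{j+2}$. Then $\frak s_o=\tfrac12[(\frak s_o+\frak s_{\overline o})+(\frak s_o-\frak s_{\overline o})]$ and $\frak s_{\overline o}=\tfrac12[(\frak s_o+\frak s_{\overline o})-(\frak s_o-\frak s_{\overline o})]$. These give $q(\frak s_o)=q(\frak s_{\overline o})=\min\{s-1,s+1\}=s-1$ directly, so Step 2 is unnecessary.
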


To show that the Rasmussen invariant $s(L,o,P)$ is an invariant of the oriented link $(\LL,o)$, we need to show $s(L,o,P)$ does not depend on the choice of basepoints with local orientations $P$, and a diagram move from two link diagrams $L$ to $L'$ for the same link induces a homotopy equivalence that maps the Lee generator $\frak s(L,o,P)$ of $L$ to a unit multiple of the Lee generator for $L'$. 

First, suppose $L$ has one projection component, then there is one basepoint, $P=\{\vec p\}$. Let $\overline{\vec p}$ be $\vec p$ with the local orientation reversed, and $\overline P=\{\overline{\vec p}\}.$ It is clear that $\frak s(L,o,\overline P)=\frak s(L,\overline o,P)$, and it follows that $s(L,o,P)=s(L,o,\overline P)$.

As in \cite[Lemma 6.1]{bw-categorification}, if $L=L_1\sqcup L_2$ is a disjoint union, then $$s(L_1\sqcup L_2, o_1\sqcup o_2, P_1\sqcup P_2)=s(L_1,o_1,P_1)+s(L_2,o_2,P_2)-1.$$ Now for any link diagram $L$, suppose we reverse the local orientation at point $\vec p_i\in P$ to get $P'$. Let $L=L_1\sqcup L_2$ where $L_1$ is the projection component containing $p_i$, then $P=P_1\sqcup P_2$ where $P_1=\{\vec p_i\}$ and $P_2=P\setminus\{\vec p_i\}$. By the above, $s(L_1,o_1,P_1)=s(L_1,o_1,\overline P_1)$, so
\begin{align*}
    s(L,o,P') &= s(L_1,o_1,\overline P_1)+s(L_2,o_2,P_2)-1 \\
    &= s(L_1,o_1,P_1)+s(L_2,o_2,P_2)-1 \\
    &= s(L,o,P).
\end{align*}

We showed above that diagram moves induce cobordism maps that are homotopy equivalences mapping Lee generators to unit multiples of Lee generators. Thus, the $s$-invariant is invariant under diagram moves.

%Finally, moving a basepoint within a component of the projection has the same effect as reversing the local orientation at a basepoint (or doing nothing). Therefore, $s(L,o,P)$ is invariant under the choice of $P$, and we will henceforth write $s(L,o)$ or just $s(L)$ when the orientation is clear from context.

%For Reidemeister moves, the proof of invariance is the same as in \cite{rasmussen-s-inv}. The isomorphisms induced by finger and mirror moves clearly preserve Lee generators.

%For a handleslide move between links $L$ and $L'$, choose a sequence of moves as in the proof of handleslide invariance that induce homotopy equivalences on Lee homology. This sequence of moves consists of Reidemeister and Reidemeister prime moves, finger and mirror moves, or a move that changes the homotopy class of a strand. As mentioned previously, Reidemeister moves and finger and mirror moves preserve Lee generators. 

%Reidemeister prime moves induce maps that are analogous to their corresponding Reidemeister move and preserve Lee generators. Finally, a move that changes the homotopy class of a strand maps $\frak s(L,o,P)$ to $\frak s(L',o',P')$.

\subsection{s-invariant in Connected Sums with \texorpdfstring{$\#^q(S^1\times S^2)$}{\#^q(S^1xS^2)}}

Now let $M^q=M\#(\#^q (S^1\times S^2))$ and $\LL\subset M^q$ be an even link. Recall that $CLee(L)$ is built by inserting infinite twist complexes $C^*(\TT_{n_i}^\infty)$ into the diagram. The following is shown in the proof of \cite[Theorem 2.10]{mmsw-s-invariant}:

\begin{lemma}
A null-homologous orientation $o$ on $\TT_{n_i}$ determines a specific oriented diagram $\epsilon_{n_i,o}\in CLee^\#(\TT_{n_i})$ satisfying the following properties:
\begin{enumerate}
    \item $\epsilon_{n_i,o}$ has homological degree $0$
    \item it has through-degree $0$
    \item it is in the stable range of the sequence of inclusions $$CLee^\#(\TT_{n_i})_{\ge -1}\hookrightarrow CLee^\#(\TT_{n_i}^2)_{\ge -3}\hookrightarrow CLee^\#(\TT_{n_i}^3)_{\ge -5}\hookrightarrow\cdots$$ which limit to $CLee(\TT_{n_i}^\infty)$
    \item in the simplification of the Lee complex of any finite number of full twists $CLee(\TT_{n_i}^k)\to CLee^\#(\TT_{n_i}^k)$, any resolution with through-degree $0$ is simplified to $\epsilon_{n_i,o}$.
\end{enumerate}
\end{lemma}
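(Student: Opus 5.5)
The plan is to follow the argument in the proof of \cite[Theorem 2.10]{mmsw-s-invariant}, working entirely inside $\TT_{n_i}$ and the twist complexes. None of the connected-sum structure of $M^q$ enters here. The proof has three parts: simplify each finite full-twist Lee complex, construct $\epsilon_{n_i,o}$ by hand, and check stability under the limit.

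\textbf{Simplifying the finite complexes.} First I would recall the simplified complexes $CLee^\#(\TT_{n_i}^k)$. These are obtained by Gaussian elimination (delooping followed by cancelling isomorphisms) from the Bar-Natan complex of $k$ full twists. The Lee deformation only adds a filtered perturbation to the differential, so the same sequence of cancellations goes through. Each cancellation is against an isomorphism whose leading term is the undeformed one, so it is still invertible after the deformation. The result is a complex of crossingless $(n_i,n_i)$-tangles, graded by through-degree. As in \cite{willis-kh}, the summands of through-degree $0$, where every strand turns back, sit in homological degree $0$ for right-handed twists. Moreover the simplification collapses all through-degree $0$ resolutions of $\TT_{n_i}^k$ onto a single such summand.

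\textbf{Constructing $\epsilon_{n_i,o}$.} A null-homologous orientation $o$ means that half of the $n_i$ endpoints on each side point in, half point out. I would take $\epsilon_{n_i,o}$ to be the through-degree $0$ crossingless tangle obtained by capping adjacent endpoints of opposite orientation with oriented caps. This is the oriented resolution of a maximally nested pairing compatible with $o$. Properties (1) and (2) then hold by construction: homological degree $0$ follows from the count of positive and negative crossings resolved into the oriented smoothing, and through-degree $0$ holds because no strand passes through.

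\textbf{Stability and the main obstacle.} For property (3), I would use the explicit inclusions $CLee^\#(\TT_{n_i}^k)_{\ge 1-2k}\hookrightarrow CLee^\#(\TT_{n_i}^{k+1})_{\ge -1-2k}$ from \cite{willis-kh}. These are built by tensoring with the through-degree $0$ part of one more twist and noting that a cap absorbs a full twist up to Reidemeister I and II moves. The through-degree $0$ piece in degree $0$ is therefore carried isomorphically to the corresponding piece of the next complex. So $\epsilon_{n_i,o}$ lies in the stable range once $2k-1>0$. Property (4) is the step I expect to be the main obstacle: one must check that the Gaussian eliminations send every through-degree $0$ resolution to $\epsilon_{n_i,o}$ rather than to a sum involving other summands. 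I would argue by induction on $k$. Any through-degree $0$ resolution factors through caps at the boundary, and a full twist slides through a cap, giving a chain of Reidemeister moves. The induced simplification maps then compose to the identity on the capped diagram, up to a unit, because Reidemeister-move equivalences in Lee theory send oriented resolutions to oriented resolutions, as in Rasmussen's argument.
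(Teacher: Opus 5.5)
The paper gives no argument of its own here. It quotes the lemma from the proof of \cite[Theorem 2.10]{mmsw-s-invariant}, so deferring to that proof is the same route. The problem is that your sketch of that proof rests on claims that are false.

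You assert that the through-degree-$0$ summands of $CLee^\#(\TT_n^k)$ sit only in homological degree $0$. You also assert that simplification collapses every through-degree-$0$ resolution onto a single summand. Both fail already for $n=2$, using the normalization in which the antiparallel oriented resolution has degree $0$. There the simplified complex of $k$ full twists is $\mathrm{id}\to\cup\cap\to\cdots\to\cup\cap$, with a through-degree-$0$ summand in every degree from $1-2k$ to $0$. The resolutions $01$ and $10$ of $\TT_2$ have through-degree $0$ but lie in degree $-1$, so no degree-preserving simplification map can send them to $\epsilon_{2,o}$.

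Nor is there a single summand even in degree $0$. For $n=4$, follow the oriented smoothings through $(\sigma_1\sigma_2\sigma_3)^4$, inserting $e_i$ exactly when the strands at positions $i,i+1$ are oppositely oriented, and let $\delta$ be a closed circle. This gives $(e_1e_3)^4=\delta^6e_1e_3$ for $o=(+,-,+,-)$ and $\delta^2e_2e_1e_3e_2$ for $o=(+,+,-,-)$. These are two different matchings, both in degree $0$. So (4) can only be a statement about the oriented resolution $\TT^k_{n,o}$, which is how the paper uses it. Your induction aims at a collapse that does not happen.

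The same computation shows that $\epsilon_{n,o}$ cannot be built by hand. Your ``maximally nested pairing compatible with $o$'' is $\{1,4\},\{2,3\}$ for $o=(+,-,+,-)$. The oriented resolution instead gives the non-nested $\{1,2\},\{3,4\}$. The paper takes $\epsilon_{n,o}$ to be the simplification of $\TT_{n,o}$, and only with that definition does (1) follow from your crossing count. As written, your argument for (1) is circular: the count applies to the oriented resolution, not to an independently chosen cap diagram.

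Two ingredients are missing.
\begin{itemize}
\item[(a)] You need the combinatorial fact that, for a balanced orientation, the oriented resolution of the full twist is a through-degree-$0$ matching plus closed circles. The bookkeeping above checks this in examples, but in general it needs a proof.
\item[(b)] You need a chain-level tracking of that particular resolution. This means following it through the delooping and Gaussian eliminations, and through the stabilization maps $CLee^\#(\TT_n^k)_{\ge 1-2k}\hookrightarrow CLee^\#(\TT_n^{k+1})_{\ge -1-2k}$, and showing it lands on a unit multiple of $\epsilon_{n,o}$ at every stage.
\end{itemize}
Property (3) is exactly this compatibility, not merely the degree inequality $0\ge 1-2k$. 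Rasmussen's argument does not supply (b). It concerns Reidemeister maps acting on Lee classes of closed links in homology, not where one resolution of a tangle complex goes under Gaussian elimination.
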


This diagram $\epsilon_{n_i,o}$ is the simplification of the usual oriented resolution of $\TT_{n_i}$, $\TT_{n_i,o}$. As a consequence of this lemma, for any link diagram $L$ for a link in $M^q$ with an orientation $o$, there is a diagram $L_o$ given by replacing each twist region $\TT_{n_i}^\infty$ with $\epsilon_{n_i,o}$ such that the simplification of the oriented resolution $L(\vec k)_o$ for any $k\ge 1$ results in $L_o$.

\begin{definition}
Given a link diagram $L$ of a link in $M^q$, an orientation $o$ of $L$ making $L$ have zero algebraic intersection number with each surgery sphere, and a set of basepoints $P$ for $L$ as in Definition~\ref{def-basepoints}, the Lee generator $\frak s(L,o,P)\in CLee(L)$ is the chain element constructed by the following steps:
\begin{enumerate}
    \item Construct the diagram $L_o$ as described above, considered as the oriented resolution of a link diagram for a link in $M$
    \item Take the Lee generator $\frak s(L_o,P)$.
\end{enumerate}
\end{definition}

The above definition makes sense based on the argument in the proof of \cite[Theorem 2.10]{mmsw-s-invariant}: for any $k\ge 1$, the Lee generator $\frak s^\#(L(\vec k)_{o_k},P_k)$ for the simplification of $L(\vec k)_{o_k}$ is equal to $\frak s(L,o,P)$. For $k\ge \frac{n^++2}{2}$, $\frak s^\#(L(\vec k)_{o_k},P_k)$ is in the stable range of the sequence of inclusions
\begin{equation}\label{eq:inclusion-lee}
        CLee^\#(L(\vec 1))_{\ge n^+-1}\hookrightarrow CLee^\#(L(\vec 2))_{\ge n^+-3}\hookrightarrow\cdots\hookrightarrow CLee^\#(L(\vec k))_{\ge n^+-2k+1}\hookrightarrow\cdots,
\end{equation}
so $\frak s(L,o,P)$ is well-defined.

Furthermore, they show that the Lee generator $\frak s(L(\vec k),o_k,P_k)$ for $L(\vec k)_{o_k}$ is mapped to a unit multiple of $\frak s^\#(L(\vec k)_{o_k},P_k)$ under the simplification homotopy equivalence $\Psi:CLee(L(\vec k))\to CLee^\#(L(\vec k))$.

\begin{proposition}
The Lee generator $[\frak s(L,o,P)]$ generates an $\F$-summand in $HLee(L)$.
\end{proposition}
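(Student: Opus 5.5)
The plan is to reduce to a finite approximation $L(\vec k)$, which is a link in $M\# S^3$ and hence a link in a connected sum of interval bundles, where Lemma~\ref{lem:lee-generator} and the Lee-type dimension count are available. By the earlier proposition that $CLee(L)_{\ge d}=CLee(L(\vec k))$ for a fixed homological range, and since $HLee$ of a link in $M$ is supported in homological degree $0$, the relevant part of $HLee(L)$ should be computed from truncated complexes. Concretely, I would fix $d$ well below $0$ (say $d=-2$) and choose $\vec k$ large, in particular $k_i\ge \frac{n^++2}{2}$. This places $\frak s(L,o,P)$ in the stable range of the inclusions~\eqref{eq:inclusion-lee}, and homology in degrees $\ge d+1$ of $CLee(L)$ agrees with that of $CLee(L(\vec k))$ up to the stated grading shift.

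First I will transport the class. Under the simplification homotopy equivalence $\Psi:CLee(L(\vec k))\to CLee^\#(L(\vec k))$, the finite Lee generator $\frak s(L(\vec k),o_k,P_k)$ maps to a unit multiple of $\frak s^\#(L(\vec k)_{o_k},P_k)$. By the lemma on $\epsilon_{n_i,o}$, this element equals $\frak s(L,o,P)$ under the stable inclusion. So $[\frak s(L,o,P)]$ corresponds, up to a unit, to the class of an honest Lee generator of the link $L(\vec k)\subset M\# S^3$.

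Next I will show that this class spans a summand. In $HLee(L(\vec k))$, a vector space over $\F$ of dimension $2^m$ with basis the Lee generators, $[\frak s(L(\vec k),o_k,P_k)]$ is a basis vector and hence spans an $\F$-summand. Here $m$ is the number of components, which is unchanged when twists are added along the surgery lines. It remains to check that the identification $H_0(CLee(L))\cong H_0(CLee(L(\vec k)))$ is an isomorphism, not merely a map. I would get this by taking the homological truncation at $d\le -2$: truncation below changes homology only in degree $d$, and the inclusions~\eqref{eq:inclusion-lee} are isomorphisms on the stable range, so degree $0$ is unaffected. Over a field, any nonzero vector spans a summand, so it would even suffice to show the class is nonzero. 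For this I would use the image under $\Psi$ together with Lemma~\ref{lem:lee-generator} applied to $L(\vec k)$.

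The main obstacle is this comparison step. The colimit is built from maps $CLee^\#(L(\vec k))_{\ge n^+-2k+1}\hookrightarrow CLee^\#(L(\vec{k+1}))_{\ge\cdots}$, and I must ensure that a cycle which is not a boundary at stage $k$ stays a non-boundary at later stages. Otherwise the class could die in the colimit through boundaries coming from lower degrees at later stages. I would handle this by taking $k$ large enough that degrees $-1,0,1$ all lie strictly inside the stable range, following \cite[Theorem 2.10]{mmsw-s-invariant}. Then the differentials into and out of degree $0$ agree at all later stages, and homology in degree $0$ stabilizes.
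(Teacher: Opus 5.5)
Your proposal is correct and follows the paper's argument: $[\frak s(L(\vec k),o_k,P_k)]$ spans a summand of $HLee(L(\vec k))$ by Lemma~\ref{lem:lee-generator}, and $\Psi$ carries it to a unit multiple of $\frak s^\#(L(\vec k)_{o_k},P_k)=\frak s(L,o,P)$. The paper compresses the comparison $H_0(CLee(L))\cong H_0(CLee^\#(L(\vec k)))$ into ``the result follows,'' and your check that degree-$0$ homology stabilizes once degrees $-1,0,1$ lie in the stable range supplies that step explicitly.
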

\begin{proof}
We know that $[\frak s(L(\vec k),o_k,P_k)]$ generates an $\F$-summand in $HLee(L(\vec k))$ for any $k$ by Lemma~\ref{lem:lee-generator}. Since the simplification homotopy equivalence $$\Psi:CLee(L(\vec k))\to CLee^\#(L(\vec k))$$ maps $\frak s(L(\vec k),o_k,P_k)$ to a unit multiple of $\frak s^\#(L(\vec k)_{o_k},P_k)$ and $$\frak s^\#(L(\vec k)_{o_k},P_k)=\frak s(L,o,P),$$ the result follows.
\end{proof}

As before, define the $s$-invariant $s(L,o,P)$ of $(L,o)$ with set of oriented basepoints $P$ as in Equation~\ref{eq:s-inv}. The invariance of $s(L,o,P)$ for links in $M^q$ follows from the invariance of the $s$-invariant for links in $M$ together with the fact that finite approximations of $HLee(L)$ preserve Lee generators.

\subsection{Properties of the s-invariant}

Let $U$ be a knot with no crossings, and $[U]\in H_1(M)$ be its homology class. Then we say that $U$ is a class $[U]$-unknot.

The $s$-invariant of any unknot is zero.

\begin{proposition}
If an oriented link $L$ is a disjoint union of two oriented links $L_1$ and $L_2$, then
$$s(L)=s(L_1)+s(L_2)-1.$$
\end{proposition}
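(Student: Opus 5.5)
The plan is to reduce to a chain-level tensor decomposition, exactly as in \cite[Lemma 6.1]{bw-categorification}, which was already quoted in the basepoint-independence argument above. First I fix compatible diagrams. Since $L$ is a disjoint union, I choose a diagram $L=L_1\sqcup L_2$ in $\Sigma^q$ in which the two diagrams share no crossings. I will also arrange that no projection component meets both $L_1$ and $L_2$. When the sublinks are separated by a sphere this is automatic after an isotopy. In general I only need that the diagram contains no crossing between $L_1$ and $L_2$, so that the projection components of $L$ are exactly those of $L_1$ together with those of $L_2$. The basepoint set is then $P=P_1\sqcup P_2$ and the orientation is $o=o_1\sqcup o_2$. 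For $M^q$ I work with a finite approximation $L(\vec k)=L_1(\vec k)\sqcup L_2(\vec k)$. By the previous subsection, the Lee generators and $s$ can be computed there (after simplification), so it suffices to treat links in $M$.

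Second, I establish the tensor splitting. With no crossings between the components, the cube of resolutions of $L$ is the product of the cubes of $L_1$ and $L_2$, and each resolution is a disjoint union of circles. Because $\FF'$ is monoidal on disjoint unions (the maps $m',\Delta'$ only involve the circles touched by the saddle), we get $CLee(L)\cong CLee(L_1)\otimes CLee(L_2)$ as filtered complexes. One point needs care: the Gabrov\v sek sign functions $w_-(S,C)$ are determined by local orientation data that can be chosen componentwise, so the isomorphism respects the differentials. Under this identification the Lee generators factor:
$$\frak s(L,o,P)=\frak s(L_1,o_1,P_1)\otimes\frak s(L_2,o_2,P_2).$$
This follows from Definition~\ref{def-lee-generator}, since the edge graph of $L_o$ is the disjoint union of the edge graphs of $(L_1)_{o_1}$ and $(L_2)_{o_2}$.

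Third, I run the Beliakova--Wehrli computation. Write $x_\pm=\frak s(L_1,o_1,P_1)\pm\frak s(L_1,\overline{o}_1,P_1)$ and $y_\pm$ similarly for $L_2$. Then
$$\frak s(L,o,P)\pm\frak s(L,\overline o,P)=\tfrac12\left(x_+\otimes y_\pm+x_-\otimes y_\mp\right).$$
Over a field, the quantum filtration degree of a class in the tensor product of homologies satisfies $q([x\otimes y])=q([x])+q([y])$ for filtered bases adapted to the filtration (Künneth for filtered complexes over a field). By Proposition~\ref{prop:min-s-grading}, $x_\pm$ and $y_\pm$ have degrees $s_i\pm1$ in some order. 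Hence the two sums have degrees $s_1+s_2$ and $s_1+s_2-2$, and averaging via Equation~\eqref{eq:s-inv} gives $s(L)=s(L_1)+s(L_2)-1$.

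The main obstacle is the filtered Künneth step. We must show that the filtration degree of a sum of two tensors is the minimum of the degrees, with no cancellation, and this requires that $[x_+],[x_-]$ are filtration-adapted in $HLee(L_1)$. I would handle it as Beliakova--Wehrli do: the classes $[x_\pm]$ have distinct filtration degrees differing by exactly $2$ and span the relevant two-dimensional summand, and the same holds for $y_\pm$. Since the filtration on the tensor product is the tensor product filtration, a filtered splitting of each factor yields the result.
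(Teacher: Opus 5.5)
For links in $M$ your argument is the one the paper has in mind. The paper simply appeals to \cite[Lemma 6.1]{bw-categorification}: the splitting $CLee(L)\cong CLee(L_1)\otimes CLee(L_2)$ for a diagram with no crossings between the sublinks, the factorization of the Lee generator, and filtered K\"unneth over a field. Your treatment of those steps is fine.

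The gap is in your reduction for $M^q$. You take ``disjoint union'' to mean only that the diagram in $\Sigma^q$ has no crossings between $L_1$ and $L_2$, and then assert $L(\vec k)=L_1(\vec k)\sqcup L_2(\vec k)$. That identity fails whenever some surgery sphere pair is met by both sublinks. The finite approximation inserts $k$ full twists on all $n_i=n_i^{(1)}+n_i^{(2)}$ strands at once. In a full twist every pair of strands crosses twice, so $L(\vec k)$ has $2k\,n_i^{(1)}n_i^{(2)}$ crossings between $L_1$ and $L_2$ in that twist region. Consequently the cube of resolutions does not factor. In the limit, $C^*(\TT^\infty_{n_i})$ also does not split as a tensor product of infinite twist complexes on the two groups of strands: it is a Jones--Wenzl-type projector, which kills turnbacks joining a strand of $L_1$ to a strand of $L_2$. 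So there is no tensor splitting of $CLee(L)$ to feed into your second and third steps.

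As written, then, your proof only covers disjoint unions in which each surgery sphere meets at most one of $L_1$ and $L_2$. This can be arranged, for instance, when one sublink lies in a ball or the two lie in different connected summands. In that case $L(\vec k)$ really is $L_1(\vec k)\sqcup L_2(\vec k)$ and everything you wrote goes through. You should either build that condition into the hypothesis, which is all the paper's appeal to the $M$ case supports, or give a genuinely different argument for the shared-sphere case.

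A smaller point: the fact that $[x_\pm]$ have degrees $s_i\pm1$ comes from Equation~\eqref{eq:s-inv} together with the differ-by-two fact, not from Proposition~\ref{prop:min-s-grading}. Once the splitting is available, that proposition gives a shorter finish:
$$s(L)-1=q([\frak s(L,o,P)])=q([\frak s(L_1,o_1,P_1)])+q([\frak s(L_2,o_2,P_2)])=s(L_1)+s(L_2)-2.$$
This needs only additivity of $q$ on pure tensors and avoids your no-cancellation analysis of the sums $x_+\otimes y_\pm+x_-\otimes y_\mp$.
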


It follows that for an unlink of $n$ components, meaning a disjoint union of $n$ unknots, the $s$-invariant is $1-n$.  

Reversing the orientation of the ambient manifold $M$ to get $\overline M$ turns a link $L\subset M$ into its mirror, which we denote as $\overline L\subset\overline M$. In the diagram, the mirror of a link has all over/under-crossings reversed from the original link.

\begin{proposition}\cite[Proposition 3.9]{rasmussen-s-inv}\label{prop-mirror-s}
Let $K$ be a knot, and $\overline K$ its mirror image. Then
$$s(\overline K)=-s(K).$$
\end{proposition}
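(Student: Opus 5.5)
The plan is to follow Rasmussen's original argument: identify the Lee complex of the mirror with the dual of the Lee complex of $K$, then compare quantum filtration degrees of the Lee generators under that duality. Here $K$ is a knot in $M^q$ and $\overline K\subset\overline{M^q}$; note that $\overline{M^q}$ is again of the same form, since each $M_i$ becomes an orientation-reversed $I$-bundle over $F_i$ and $\#^q(S^1\times S^2)$ is amphichiral.

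\textbf{Step 1: duality at the level of finite diagrams.} Fix a diagram $L$ of $K$ and reduce to finite approximations. By Proposition~\ref{prop:truncation-approx} and its Lee analogue, $CLee(L)$ agrees in any fixed homological range with $CLee(L(\vec k))$. The Lee generators lie in homological degree $0$ and in the stable range for large $\vec k$. Mirroring $L(\vec k)$ gives a diagram of $\overline{L(\vec k)}$ in $\overline M\# S^3$, which has $-k_i$ twists. This direction is not the stable one, so I would avoid it: I would first prove the statement for knots in $M$, where the diagram is finite, and only then handle the $q>0$ case (Step 3).

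For a finite diagram $D$ of a knot in $M$, the resolutions of $\overline D$ are those of $D$ with $0$- and $1$-smoothings exchanged. The plan is to show there is an isomorphism
$$CLee(\overline D)\cong CLee(D)^*$$
reversing homological degree and negating quantum filtration. To get it, I would pair the generators of $V$ via $v_\pm\mapsto v_\mp^*$, so that $m'$ dualizes to $\Delta'$ and vice versa.

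\textbf{Step 2: where the signs might fail.} The main obstacle is the Gabrov\v sek orientation signs $w_-(S,C)=(-1)^{o(S,C)}v_-$. Under mirroring, a saddle $S$ is traversed backwards, so I need $o(S,C)$ for the dual saddle to equal $o(S,C)$ for the original one, or at least to differ by a global change of basis. I would check this using the $sw_{(S,C)}$ description: $m''$ and $\Delta''$ are adjoint up to the factor $2$ and the sign on $b$, since $a\mapsto a\otimes a$ and $b\mapsto -b\otimes b$. So it should suffice to check that local orientation consistency of $C$ with $S$ is preserved when the cobordism is reversed and the ambient orientation is flipped. I expect this to hold, because both reversals flip the orientation of $S$.

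Once the duality holds, the Lee generator $\frak s(\overline D,o,P)$ corresponds, up to unit, to the dual basis element of $\frak s(D,o,P)$. The argument is that the oriented resolutions coincide and the $a/b$ labelling coincides, because the basepoint rule in Definition~\ref{def-lee-generator} does not depend on crossing data. Then Rasmussen's argument (the proof of \cite[Proposition 3.9]{rasmussen-s-inv}) applies verbatim. It uses that $HLee$ is $2$-dimensional and that $s_{\max}=s_{\min}+2$, which follows from Proposition~\ref{prop:min-s-grading}. The conclusion is $s_{\min}(\overline K)=-s_{\max}(K)$, hence $s(\overline K)=-s(K)$.

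\textbf{Step 3: the case $q>0$.} Here I would use that the $s$-invariant is computed from the Lee generators, which live in the stable range where $CLee(L)$ is modelled by the simplified finite complexes $CLee^\#(L(\vec k))$. I would try to show that mirroring sends $C^*(\TT_n^\infty)$ to the complex of the negative infinite twist. The two relevant facts are:
\begin{itemize}
\item the negative infinite twist is homotopy equivalent to the dual of $C^*(\TT_n^\infty)$, as in \cite{mmsw-s-invariant}, where the mirror formula in $\#^q(S^1\times S^2)$ is proved;
\item by the Lemma above, the relevant element $\epsilon_{n,o}$ is self-dual.
\end{itemize}
Combining these with Step 1 on the $M$-part, via the tensor decomposition $BN(L)=BN(\overleftarrow L)\otimes BN(\overrightarrow L)$, should give the duality pairing on the summands containing the Lee generators. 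This is enough to run Rasmussen's argument again.
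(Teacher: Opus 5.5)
Your Steps 1--2 are Rasmussen's duality argument, which is all the paper offers: it gives no proof beyond citing \cite[Proposition 3.9]{rasmussen-s-inv}. For knots in $M$, where $CLee$ comes from a finite cube, this is the right route, but you assert the sign compatibility rather than check it, and your criterion for it is not quite the right one. Under the pairing $\langle v_+,v_-\rangle=1$, the map $sw$ (which is just $v_-\mapsto -v_-$) satisfies $\langle sw\,x, sw\,y\rangle=-\langle x,y\rangle$. So the adjoint of a twisted merge equals the twisted split with the same swap data only up to an edge sign $(-1)^{m}$, where $m$ is the number of swapped circles. What must be shown is that these edge signs, together with the data $o(S,C)$ for the mirrored diagram in $\overline M$, can be absorbed by sign changes at the vertices.

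The genuine gap is Step 3, and it cannot be closed, because the statement fails once $q>0$.
\begin{itemize}
\item $CLee(\overline L)$ is built by inserting the \emph{same} complex $C^*(\TT_{n_i}^\infty)$ of right-handed twists into the mirrored diagram. The dual of $CLee(L)$ is instead the complex with negative infinite twists inserted, so the two complexes being compared are different.
\item Self-duality of $\epsilon_{n,o}$ does not bridge this. The value of $s$ is the filtration level of a homology class, so it depends on the differential into homological degree $0$, and that differential differs between the two complexes.
\item Concretely, the approximation results give, for $k\gg 0$,
\[ s(K)=s(L(\vec k)),\qquad s(\overline K)=s(\overline L(\vec k))=s(\overline{L(-\vec k)})=-s(L(-\vec k)). \]
So for $q>0$ the proposition is equivalent to the eventual values of $s(L(\vec k))$ as $k\to+\infty$ and as $k\to-\infty$ agreeing.
\end{itemize}
They need not agree. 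Let $L\subset S^1\times S^2$ be the Whitehead pattern: two oppositely oriented strands through the surgery sphere, closed off with a positive clasp. Then $L(k)$ is the twist knot $D_+(U,k)$.
\begin{itemize}
\item For $k<0$ it is a positive genus-one knot, so $s=2$.
\item For $k\ge 0$ it has $s=0$; for instance $k=0,1,2$ give the unknot, $4_1$ and $6_1$.
\end{itemize}
Hence $s(K)=0$ while $s(\overline K)=-2$. With the opposite twist convention you would get $2$ and $0$; either way $s(\overline K)\ne -s(K)$.

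So neither Step 3 nor an appeal to \cite{mmsw-s-invariant} can give the mirror formula in $M^q$ with $q>0$. Your Steps 1--2, once the sign check is done, establish it only for knots in $M$, which is also the only setting the paper's citation of Rasmussen actually covers. Note also that without the mirror formula, the computation in Section 5 yields $2g(\Sigma)\ge s(\overline K)$ rather than $2g(\Sigma)\ge -s(K)$ when some $d_i=0$.
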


The $s$-invariant is compatible with the previous versions of the $s$-invariant defined by \cite{rasmussen-s-inv}\cite{bw-categorification} for $S^3$, \cite{mmsw-s-invariant} for $\#^q (S^1\times S^2)$, and \cite{mw-s-rp3} for $\R P^3$ in the following sense. Suppose $\LL\subset M^q$ is a local $S^3$, $\R P^3$, or $\#^q (S^1\times S^2)$ link, that is, it is disjoint from all separating spheres and entirely contained in a $S^3$, $\R P^3$, or $\#^q (S^1\times S^2)$ connect summand of $M^q=(M_1\#\dots\# M_r)\#(\#^q (S^1\times S^2))$, respectively. Then the $s$-invariant $s(L)$ agrees with the $s$-invariant of $L$ considered as a link in $S^3$, $\R P^3$, or $\#^q (S^1\times S^2)$, respectively. This is easy to see in the case of a local $S^3$ link, and by extension, a local $\#^q( S^1\times S^2)$ link since its Lee homology can be approximated using local $S^3$ links. 

In the case $\LL$ is a local local $\R P^3$ link, let $L$ be a diagram for $\LL$ in $\R P^3$ and let $L'$ be the same diagram but embedded in $M^q$ in a $\R P^3$ connect summand so that it misses all separating spheres. The oriented resolutions $L_o$ and $L'_o$ are the same, and there exists a choice of set of oriented basepoints $P$ for $L'$ such that $\frak s_o(L)$ and $\frak s(L',o,P)$ have the same labels of $a$ and $b$ for all circles in $L_o$. The same is true for $\overline o$ with the same choice of set of oriented basepoints $P$, so 
$$q(\frak s_o(L)+\frak s_{\overline o}(L))=q(\frak s(L',o,P)+\frak s(L',\overline o,P)),$$
$$q(\frak s_o(L)-\frak s_{\overline o}(L))=q(\frak s(L',o,P)-\frak s(L',\overline o,P)),$$ and it follows that $s(L)=s(L')$.

\section{Slice Genus}

As in the introduction, let $D(d)$ denote the $D^2$-bundle over $S^2$ with euler number $d$. For a null-homologous properly embedded orientable connected surface $\Sigma\subset D(d)$ with boundary a knot $K\subset \partial D(d)$, the genus bound
$$2g(\Sigma)\ge -s(K)$$ was proven for $d=0,1$ \cite[Theorem 1.15, Corollary 1.9]{mmsw-s-invariant} and for $d=2$ \cite{ren-slice-genus}. Their method of proof was to construct a cobordism between $K$ and the mirror of a link $T(d;p,q)$ for some natural numbers $p,q$, where $T(d;p,q)\subset \partial D(d)$ is the oriented link consisting of $p+q$ fibers of the $S^1$-bundle $\partial D(d)\to S^2$, $p$ of which are oriented positively and $q$ of which are oriented negatively.

\begin{proof}[Proof of Theorem~\ref{thm:slice-genus-bound}]

We follow the construction of \cite{mmsw-s-invariant} and \cite{ren-slice-genus}, with some modifications for the connected sum, to reduce to the calculation of $s$-invariants. Let $S_i$ denote the core $2$-sphere of $D(d_i)$. The second homology of $X$ is $H_2(X)=H_2(X_1)\oplus\dots\oplus H_2(X_n)$. Let $\Sigma\subset X$ be a properly embedded orientable connected surface with boundary $L\subset\partial X=(\partial X_1)\#\dots\#(\partial X_n)$. Perturb $\Sigma$ so that it intersects each $S_i$ transversely at a finite set of points, and let $p_i$ and $q_i$ be the number of positive and negative intersections of $\Sigma$ with $S_i$, respectively. 

Consider tubular neighborhoods of each of the core $2$-spheres $S_i\subset X_i=D(d_i)$. Tube these neighborhoods together through the connected sum regions as shown in Figure~\ref{fig:tube-nbhd} to form $N\subset X$. Perform the tubing away from the intersections with $\Sigma$. Then $\Sigma$ intersects $N$ in a set of disks: a disk for each intersection of $\Sigma$ with $S_i$, and a set of $k$ disks for intersections of $\Sigma$ with the tubes. 

Let $X_0=X\setminus N$. Then $X_0$ is homeomorphic to $(\partial X)\times I$. Let $\Sigma_0$ be $\Sigma$ with the disks that it intersects $N$ in cut out, then $\Sigma_0$ is a properly embedded surface $\Sigma_0\subset X_0$ whose boundary on $(\partial X)\times\{1\}$ is the original boundary of $\Sigma$ and whose boundary on $(\partial X)\times \{0\}$ is 
$$L'=\overline{T}(d_1;p_1,q_1)\sqcup \dots\sqcup \overline{T}(d_n;p_n,q_n)\sqcup U^k$$ for $k$ unknots disjoint from the rest of the link. Here we abuse notation a little bit to also let $T(d_i;p_i,q_i)$ denote the link $T(d_i;p_i,q_i)$ in the $\partial D(d_i)$ connected summand of $\partial X$. Thus $\Sigma_0$ is a cobordism from $L'$ to $L$. The Euler characteristic of $\Sigma_0$ is
$$\chi(\Sigma_0)=\chi(\Sigma)-p_1-q_1-\dots-p_n-q_n-k.$$

\begin{figure}[h]
    \centering
    \def\svgwidth{.9\linewidth}
    %% Creator: Inkscape 1.1 (c68e22c387, 2021-05-23), www.inkscape.org
%% PDF/EPS/PS + LaTeX output extension by Johan Engelen, 2010
%% Accompanies image file 'tubeNbhd.pdf' (pdf, eps, ps)
%%
%% To include the image in your LaTeX document, write
%%   \input{<filename>.pdf_tex}
%%  instead of
%%   \includegraphics{<filename>.pdf}
%% To scale the image, write
%%   \def\svgwidth{<desired width>}
%%   \input{<filename>.pdf_tex}
%%  instead of
%%   \includegraphics[width=<desired width>]{<filename>.pdf}
%%
%% Images with a different path to the parent latex file can
%% be accessed with the `import' package (which may need to be
%% installed) using
%%   \usepackage{import}
%% in the preamble, and then including the image with
%%   \import{<path to file>}{<filename>.pdf_tex}
%% Alternatively, one can specify
%%   \graphicspath{{<path to file>/}}
%% 
%% For more information, please see info/svg-inkscape on CTAN:
%%   http://tug.ctan.org/tex-archive/info/svg-inkscape
%%
\begingroup%
  \makeatletter%
  \providecommand\color[2][]{%
    \errmessage{(Inkscape) Color is used for the text in Inkscape, but the package 'color.sty' is not loaded}%
    \renewcommand\color[2][]{}%
  }%
  \providecommand\transparent[1]{%
    \errmessage{(Inkscape) Transparency is used (non-zero) for the text in Inkscape, but the package 'transparent.sty' is not loaded}%
    \renewcommand\transparent[1]{}%
  }%
  \providecommand\rotatebox[2]{#2}%
  \newcommand*\fsize{\dimexpr\f@size pt\relax}%
  \newcommand*\lineheight[1]{\fontsize{\fsize}{#1\fsize}\selectfont}%
  \ifx\svgwidth\undefined%
    \setlength{\unitlength}{868.91699603bp}%
    \ifx\svgscale\undefined%
      \relax%
    \else%
      \setlength{\unitlength}{\unitlength * \real{\svgscale}}%
    \fi%
  \else%
    \setlength{\unitlength}{\svgwidth}%
  \fi%
  \global\let\svgwidth\undefined%
  \global\let\svgscale\undefined%
  \makeatother%
  \begin{picture}(1,0.55771283)%
    \lineheight{1}%
    \setlength\tabcolsep{0pt}%
    \put(0,0){\includegraphics[width=\unitlength,page=1]{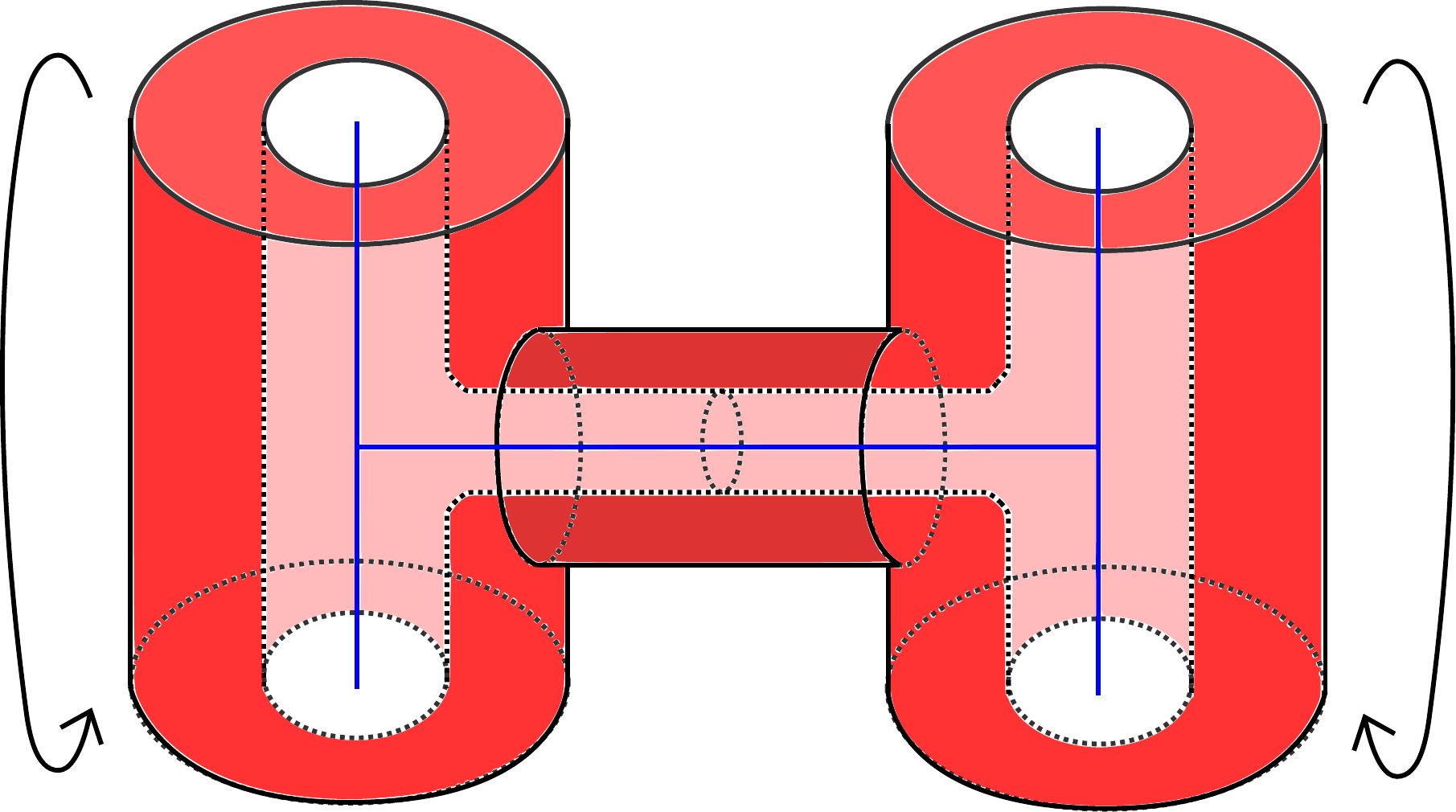}}%
    \put(0.2314802,0.05650887){\color[rgb]{0,0,1}\makebox(0,0)[lt]{\lineheight{1.25}\smash{\begin{tabular}[t]{l}$S_1$\end{tabular}}}}%
    \put(0.7337386,0.05399695){\color[rgb]{0,0,1}\makebox(0,0)[lt]{\lineheight{1.25}\smash{\begin{tabular}[t]{l}$S_2$\end{tabular}}}}%
  \end{picture}%
\endgroup%

    \caption{The neighborhood $N$ removed from $X$ to form $X_0$.}
    \label{fig:tube-nbhd}
\end{figure}

Turning the cobordism $\Sigma_0$ upside-down gives a cobordism from $\overline L$ to $\overline {L'}$. Every component of $\Sigma_0$ has a boundary in $L$, so by Theorem~\ref{thm:cobordism-s-ineq}, 
$$s(\overline{L'})-s(\overline L)\ge\chi(\Sigma_0)=\chi(\Sigma)-p_1-q_1-\dots-p_n-q_n-k.$$

Suppose $\Sigma$ is null-homologous, then $p_i=q_i$ for all $i=1,\dots,n$. It was shown by \cite[Theorem 1.6]{mmsw-s-invariant}\cite[Theorem 1.1]{ren-lee-torus}\cite[Proposition 7]{ren-slice-genus} that 
$$s(T(d;p,p))=-2p+1$$ for $d=0,1,2$. Therefore,
$$\chi(\Sigma_0)\le s(\overline{L'})-s(\overline L)=-2p_1-\dots-2p_n-(n-1+k)+n-s(\overline L).$$ If $L$ is a knot $K$, then by Proposition~\ref{prop-mirror-s}, $s(\overline K)=-s(K),$ so
$$\chi(\Sigma)\le s(K)+1.$$

\end{proof}

\bibliographystyle{alpha} %\bibliographystyle{alphahack}
\bibliography{biblio}

\end{document}